\newcommand*\widefbox[1]{\fbox{\hspace{1em}#1\hspace{1em}}}
\title{Low Regularity Estimates for CutFEM Approximations 
of an Elliptic Problem with Mixed Boundary Conditions
\funding{The 
Swedish Foundation for Strategic Research Grant No.\ AM13-0029, the Swedish Research Council Grants Nos.\ 2013-4708, 2017-03911 and the Swedish Research Programme Essence}}
\date{\today}
\author{
Erik Burman\footnote{\addressucl}
\mbox{ } 
Peter Hansbo\footnote{\addressju}
\mbox{ } 
Mats G. Larson\footnote{\addressumu} 
}
\date{\today}
\newcommand{\supp}{\text{supp}}
\newcommand{\bd}{{\partial \Omega_D}}
\newcommand{\bn}{{\partial \Omega_N}}
\begin{document}

\maketitle

\begin{abstract}
We show error estimates for a cut finite element approximation of a second order elliptic 
problem with mixed boundary conditions. The error estimates are of low regularity type 
where we consider the case when the exact solution $u \in H^s$ with $s\in (1,3/2]$. For 
Nitsche type methods this case requires special handling of the terms involving the normal 
flux of the exact solution at the the boundary. For Dirichlet boundary
conditions the estimates are optimal, whereas in the case of mixed
Dirichlet-Neumann boundary conditions they are suboptimal by a
logarithmic factor.
\end{abstract}

\section{Introduction}
In this paper we will consider the finite element approximation of the Poisson problem with 
mixed boundary conditions under minimal regularity assumptions. Let $\Omega$ be a 
domain in $\IR^d$ with smooth boundary $\partial \Omega$, which is decomposed into two subdomains $\bd$ and $\bn$ such that
$\partial \Omega 
= \overline{\partial \Omega}_D \cup \bn 
= \bd \cup \overline{\partial \Omega}_N$ and $\bd \cap \bn = \emptyset$.
Consider the problem: find $u: \Omega \rightarrow \IR$ such that 
\begin{alignat}{3}\label{eq:strong-mixed-a}
-\Delta u &=f & \qquad &\text{in $\Omega$}
\\ \label{eq:strong-mixed-b}
u &=g_D & \qquad &\text{on  $\bd$}
\\ \label{eq:strong-mixed-c}
\nabla_n u &=g_N & \qquad &\text{on  $\bn$}
\end{alignat}
where $f:\Omega \rightarrow \IR$, $g_D:\Gamma_D \rightarrow \IR$ and
$g_N:\Gamma_N \rightarrow \IR$ satisfy the following bound for $s>1$,
\begin{equation}\label{eq:data-reg}
\|f \|_{H^{s-2}(\Omega)} 
 + 
 \|g_D \|_{H^{s-1/2}(\partial\Omega_D)}
 + 
 \|g_N\|_{H^{s-3/2}(\bn)} \lesssim 1
\end{equation}
Here and below we used the notation $a \lesssim b$ for $a \leq C b$, with $C$ a
positive constant. 

For the approximation of the problem we apply a Cut Finite Element Method (CutFEM).  
In CutFEM the boundary is allowed to cut through the computational cells in an (almost) arbitrary way and stabilization terms are added in the vicinity of the boundary to ensure 
that the method is coercive and that the resulting linear system of equations is invertible. 

In previous work on fictitious domain finite element methods see \cite{BH12,BCHLM15}, error estimate were shown under the assumption that $u \in H^s(\Omega)$ with $s>3/2$. The objective of the present work is to relax this regularity requirement. Indeed, we show an a 
priori error estimates in the energy norm, requiring only that $u \in H^s(\Omega)$, where $s>1$, and $\Delta u $ is in $L^2(U_{\delta_0})$ on some arbitrarily thin neighborhood $U_{\delta_0}$ of the Dirichlet boundary $\partial \Omega_D$. Since the test functions in the Nitsche formulation of the Dirichlet condition are 
not zero on $\partial \Omega_D$, we will also have to choose the Neumann data 
$g_N$ in a slightly smaller space than $H^{-1/2}(\partial \Omega_N)$. \textcolor{black}{We focus our attention on the effects of rough data in CutFEM.  We assume that the  boundary $\partial \Omega$ of the domain $\Omega$ is smooth and 
that we can evaluate integrals on the intersection of simplices and the domain 
and its boundary, exactly.} Estimation of the error resulting from approximation of the domain can be handled using the techniques in \cite{BHLZ16}.

The study of the convergence of nonconforming methods for the
approximation of solution with low regularity has received increasing
interest since the seminal paper by Gudi \cite{Gudi10}. In that work
optimal convergence for low regularity solutions were obtained using 
ideas from a posteriori error analysis, where the error is upper
bounded by certain residuals of the discrete solution. These residuals are 
then shown to lead to optimal upper bounds using the discrete local efficiency bounds. A
similar approach was used by L\"uthen et al. \cite{LJS17} for a generalised
Nitsche's method on fitted meshes. This approach does however not seem to be
suitable for the case of cut finite element method since for cut elements the 
local efficiency bounds are not robust with respect to the mesh boundary intersection. 
Instead, in the spirit of \cite{EG18}, we use a
version of duality pairing to handle the term involving the normal
flux of the interpolation error. This is made more delicate by the
presence of mixed boundary conditions. Indeed to include this case in
the analysis we introduce a regularized bilinear form and use the solution to the
regularized problem as pivot in the error estimate. The regularization
gives rise to a logarithmic factor. Observe that this is due to the mixed boundary
conditions. For pure Dirichlet conditions or pure Neumann conditions
the analysis results in optimal error bounds for $s\ge1$.

The paper is organized as follows: In Section 2 we introduce the functional framework 
for the model problem and formulate the finite element method and in Section 3 we 
derive the error estimates.

\section{Weak Formulation and the Finite Element Method}
Since we consider low regularity solutions of a problem with mixed
boundary condition we must be careful with the fractional Sobolev
spaces for the traces of the functions. In this section we first
introduce the notations and definitions for the functional analytical
framework, leading to the weak formulation of
\eqref{eq:strong-mixed-a}--\eqref{eq:strong-mixed-c}. Then we introduce
the cut finite element method for the approximation of the weak solutions.

\subsection{Function Spaces}
Let $\omega \subset \IR^n$ and let $H^s(\omega)$ denote the usual Sobolev 
spaces on $\omega$. Define 
\begin{align}
H^{1/2}(\partial \Omega) &= H^1(\Omega)|_{\partial \Omega}
\\
\| v \|_{H^{1/2}(\partial \Omega )} 
&= \inf_{w \in H^1(\Omega), w|_{\partial \Omega} = v} \| w \|_{H^1(\Omega)}
\end{align}
and for $\Gamma \subset \partial \Omega$, define
\begin{align}
H^{1/2}(\Gamma) &= H^{1/2}(\partial \Omega)|_\Gamma
\\
\| v \|_{H^{1/2}(\Gamma)}
&=
 \inf_{w \in H^1(\Omega), w|_\Gamma = v} \| w \|_{H^1(\Omega)}
\end{align}
and the subspace 
\begin{align}
\widetilde{H}^{1/2}(\Gamma) &= \{ v \in H^{1/2}(\Gamma) : \supp(v) \subset \Gamma\} 
\subset H^{1/2}(\Gamma)
\end{align}
Then $H^{1/2}(\partial \Omega\setminus \Gamma)
= H^{1/2}(\partial \Omega)/ \widetilde{H}^{1/2}(\Gamma)$ 
and $H^{1/2}(\partial \Omega) = H^{1/2}(\partial \Omega\setminus \Gamma) 
\oplus \widetilde{H}^{1/2}(\Gamma)$. Next define the dual spaces
\begin{align}
H^{-1/2}(\partial \Omega) &= [ H^{1/2}(\partial \Omega) ]^*
\\
H^{-1/2}(\Gamma) &= [ \widetilde{H}^{1/2}(\Gamma) ]^*
\\
\widetilde{H}^{-1/2}(\Gamma) &= [ {H}^{1/2}(\Gamma) ]^*
\end{align}
consisting of functionals $g:X \rightarrow \IR$ with duality pairing 
$\langle g,v \rangle_{X^*\times X} = g(v)$ and norm 
\begin{equation}
\| g \|_{X^*} = \sup_{ v \in X \setminus \{ 0 \} } \frac{g(v)}{\| v \|_X}
\end{equation}
with $X\in \{ H^{1/2}(\partial \Omega), H^{1/2}(\Gamma),  \widetilde{H}^{1/2}(\Gamma) \}$.  We 
will use the simplified 
notation $(g,v)_\Gamma$ for the duality pairing. Note that for each $g \in H^{-1/2}(\Gamma)$ 
we may define $\widetilde{g}\in H^{-1/2}(\partial \Omega)$ by $\widetilde{g}(v) = g(v|_\Gamma)$ 
and thus $H^{-1/2}(\Gamma) \hookrightarrow H^{-1/2}(\partial \Omega)$.
 
\subsection{Weak Formulation}
The problem (\ref{eq:strong-mixed-a})--(\ref{eq:strong-mixed-c}) can be
cast on weak form: find $u \in V_{g_D}$ such that 
\begin{equation}\label{eq:weak-problem}
a(u,v) = l(v) \qquad v \in V_0
\end{equation}
where 
\begin{equation}\label{eq:weak-forms}
a(v,w) = (\nabla v,\nabla w)_\Omega, \qquad l(v) 
= (f,v)_{\partial \Omega} + (g_N,v)_{\bn} 
\end{equation}
and, for each $ g_D  \in H^{1/2}(\bd)$, 
\begin{equation}
V_{g_D} = \{ v \in H^1(\Omega) : v|_{\bd} = g_D \}
\end{equation}
For $f \in H^{-1}(\Omega)$, $g_D \in H^{1/2}(\partial \Omega_D)$, and 
$g_N \in H^{-1/2}(\partial \Omega_N)$, there
exists a unique weak solution to (\ref{eq:weak-problem}) and the following 
elliptic regularity estimate holds, $1 \leq s < 3/2$,
 \begin{equation}\label{eq:regularity}
 \| u \|_{H^{s}(\Omega)} \lesssim 
 \|f \|_{H^{s-2}(\Omega)} 
 + 
 \|g_D \|_{H^{s-1/2}(\partial\Omega_D)}
 + 
 \|g_N\|_{H^{s-3/2}(\bn)}
 \end{equation}
We refer to Savar\'e \cite{Sav97} for a precise characterization of the
regularity for the mixed problem.

\subsection{The Normal Flux}

The normal flux $\nabla_n u  = n \cdot \nabla u \in H^{-1/2}(\partial \Omega)$, where $n$ is 
the exterior unit normal, plays an important role in what follows. For
\textcolor{black}{$u \in H^1(\Omega)$, with $\Delta u \in L^2(\Omega)$}, it
can be defined by the identity 
\begin{equation}\label{eq:flux_def}
(\nabla_n u, v )_{\partial \Omega}  = (\Delta u, v )_\Omega + (\nabla u, \nabla v)_\Omega 
\qquad \forall v \in H^1(\Omega)
\end{equation}

Observe that in the finite element method we work with weakly enforced boundary conditions 
and therefore we will not have test functions that vanish on $\bd$, i.e. the test functions are not in $V_0$, and herefore we will consider boundary data such that 
\begin{equation}
g_D \in H^{1/2}(\bd), \qquad g_N \in \widetilde{H}^{-1/2}(\partial \Omega_N)
\end{equation}
where the Neumann data $g_N$ is chosen in the smaller space 
$\widetilde{H}^{-1/2}(\partial \Omega_N) \subset {H}^{-1/2}(\partial \Omega_N)$, 
compared to the strong formulation and corresponding weak form (\ref{eq:weak-problem}). 
We will also assume that the source term $f$ is square integrable over some (arbitrary thin) neighbourhood of the boundary $\partial \Omega$, see (\ref{eq:defV}) below.

\subsection{Finite Element Method}
To define the cut finite element method let $\Omega_0$ be a polygonal domain such 
that $\Omega \subset \Omega_0$ and let $\{\mcT_{h,0}: h \in (0,h_0]\}$ be a family of 
quasiuniform meshes covering $\Omega_0$ with mesh 
parameter $h := \max_{T \in \mcT_{h,0}} \mbox{diam}(T)$. \textcolor{black}{For a subset $\omega\subset \Omega_0$, define the
submesh of elements intersecting $\omega$, by $\mcT_h(\omega):=\{ T \in
\mcT_{h,0} : T \cap \omega  \neq \emptyset\}$, and let $\mcT_h := \mcT_h(\Omega)$} be the 
so called active mesh. Let $V_{h,0}$ be the conforming finite element space defined on
  $\mcT_{h,0}$ consisting of piecewise
  affine functions and define $V_h = V_{h,0}|_{\mcT_h}$. 
Define the bilinear forms 
\begin{align}\label{eq:Ah}
A_h(v,w) &:= a(v,w) 
- (\nabla_n v,w)_{\bd} 
- (v, \nabla_n w)_{\bd} 
+\beta h^{-1} (v,w)_{\bd}
\\ \label{eq:sh}
s_h(v,w) &:= \sigma h([ \nabla_n v],[\nabla_n w])_{\mcF_h(\partial \Omega)}
\\ \label{eq:Lh}
L_h(v) &:= (f, v)_\Omega  
+ (g_N,v)_{\bn}
- (g_D, \nabla_n v)_{\partial\Omega_D} 
+ \beta h^{-1} (g_D,v)_{\bd}
\end{align}
with positive parameters $\beta$ and $\sigma$, 
$\mcF_h(\partial \Omega)$ the set of interior faces in $\mcT_h$ 
associated with an element $T \in \mcT_h(\partial \Omega) 
= \{ T \in \mcT_h : T \cap \partial \Omega \neq \emptyset\}$ that 
intersects the boundary, and the jump in the normal flux 
at face $F$ shared by elements $T_1$ and $T_2$ is defined by 
\begin{equation}
[\nabla_n v ] = \nabla_{n_1} v_1 + \nabla_{n_2} v_2  \qquad \text{on $F$}
\end{equation}\label{eq:jump}
where $v_i = v|_{T_i}$ and $n_i$ is the unit exterior normal.

Define the finite element method: find  $u_{h} \in V_{h}$ 
 such that 
\begin{equation}\label{eq:method}
A_{h}(u_{h},v) + s_h(u_h,v) = L_h(v)\qquad \forall v \in V_{h}
\end{equation}

%

\section{Error Analysis}
In this section we will derive the error estimates, here as usual the
consistency of the method is of essence. However, for solutions with
low regularity this is delicate in the case of mixed boundary conditions. 
Indeed, in the low regularity case, \eqref{eq:flux_def} is not sufficient to make
sense of the term $(\nabla_n u,w)_{\bd}$ \textcolor{black}{for
  approximation purposes, since the division on $\bd$
and $\bn$ necessarily results in a boundary integral over one of the
subdomains that has to be lifted in some other fashion. This is
problematic since the solution is not regular enough to allow for the usual trace inequality arguments}. To handle this difficulty we introduce a regularized finite element 
formulation (for analysis purposes only), where a smooth weight function $\chi$ is
introduced and the problematic term is replaced by 
\begin{equation}
(\nabla_n u,w)_{\chi,\partial \Omega} := (\chi \nabla_n u,w)_{\partial \Omega}
\end{equation}
The regularized method has a consistency error that can be controlled
by sharpening the cut off function $\chi$.

\subsection{Outline}
We shall prove low regularity energy norm error estimates using the 
following approach:
\begin{itemize}
\item Similarly to \cite{Gudi10} we estimate the error in a norm which 
does not involve the $L^2$ norm of the normal trace of the gradient.
\item For the case of mixed boundary conditions, we introduce a regularized bilinear form and the corresponding
  (nonconsistent) finite element 
method. The regularization takes the form of a weight function
smoothing the transition from the Dirichlet to the Neumann boundary
condition in the first boundary integral of the form $A_h$, see
equation \eqref{eq:Ah}.  In the regularized norm we can use a version of $H^{-1/2}-H^{1/2}$ 
duality in an $\epsilon$ neighborhood of $\bd$.
\item The total error is estimated using a Strang type argument. The
  error is divided into the approximation error, the discrete error
  between an interpolant and the finite element solution of the
  regularized formulation and finally the regularization error between
  the regularized and standard finite element solutions.
\end{itemize}

\subsection{The Cut Off Function}
Key to the regularized problem is the design of the weight
function, \textcolor{black}{ $\chi:\Omega \rightarrow \mathbb{R}$ with support in a
neighbourhood of $\bd$}. This function takes the value $1$ on $\bd$ and
decays smoothly to zero in an $\epsilon$ neighbourhood of
$\overline{\partial \Omega}_D \cap \overline{\partial \Omega}_N$ 
and into the domain away from the boundary. This
way it plays the role of a cut off, that localizes the boundary
integral to $\bd$, while the form remains well defined
for low regularity solutions. In order to define the cut off function 
we introduce some notation. 

\paragraph{Notation.} \textcolor{black}{For $x \in \IR^d$, $\omega \subset \IR^d$, 
let $\rho_\omega(x)>0$ be the distance function $\rho_\omega(x) = \text{dist}(x,\omega)$ 
and let $p_\omega:\IR^d\rightarrow \omega$ be the closest point mapping.
In the case $\omega \equiv \partial \Omega$ we drop the subscript.} For 
$\delta \in (0,\delta_0]$, define the $\delta$-neighbourhood of $\partial \Omega$, 
\begin{equation}\label{eq:Udelta-bnd}
U_\delta(\partial \Omega) 
= \{ x \in \Omega : \rho(x) < \delta\}
\end{equation}
Then there is $\delta_0>0$ such that the closest point mapping 
$p:U_{\delta_0}(\partial \Omega) \rightarrow \partial \Omega$ maps every $x$ to 
precisely one point at $\partial \Omega$. We also define $\delta$-neighbourhood 
of $\bd$ and $\bn$ as follows
\begin{equation}\label{eq:Udelta-D}
U_\delta(\bd) = \{ x \in U_\delta(\partial \Omega) : p(x) \in \bd \}, 
\qquad 
U_\delta(\bn) =U_\delta \setminus U_\delta(\bd) 
\end{equation}
Let $\Sigma  = \partial (\bd) = \partial (\bn)$ be the smooth interface separating 
$\bd$ and $\bn$ and let $\nu$ be the unit conormal to $\Sigma$ exterior to 
$\bn$ and tangent to $\partial \Omega$. See Figure \ref{fig:a}. For $t \in [0,\delta_0]$ let
\begin{align}
\partial \Omega_{t} &= \{ x \in \Omega : \rho(x) = t \}
\\
\partial \Omega_{N,t} &= \{ x \in \partial \Omega_t : p(x) \in \bn \}
\\
\Sigma_t &= \{ x \in \partial \Omega_t : p(x) \in \Sigma \}
\end{align}
Note that $p: \partial \Omega_t \rightarrow \partial \Omega$ is a bijection 
for all $t \in [0,\delta_0]$.
Let 
\begin{equation}\label{eq:Utgamsigt}
U_{t,\gamma}(\Sigma_t) = \{ x \in \partial \Omega_{N,t} :\rho_{\Sigma_t}(x) 
< \gamma \} \subset \partial \Omega_{N,t}
\end{equation}
be the $\gamma$ tubular neighborhood of $\Sigma_t$ in $\partial \Omega_{N,t}$, 
and assume that $\gamma \in (0,\gamma_0]$ with $\gamma_0$ small enough to guarantee that the closest point mappings $p_{\Sigma_t}$ are well defined for all $t\in [0,\delta_0]$, and let 
\begin{equation}\label{eq:Ugamsig}
U_{\gamma}(\Sigma)= U_{0,\gamma}(\Sigma_0) \subset \partial \Omega_N 
\end{equation}
Define 
\begin{equation}\label{eq:Udeltaepsilon}
U_{\delta,\epsilon} = \cup_{t \in [0,\delta]} U_{t, \gamma(t)}(\Sigma_t)
\end{equation}
with $\gamma(t)=t+\epsilon$ for $\epsilon \in (0,\epsilon_0]$ 
and $\epsilon << \delta$, see Figure \ref{fig:b}. \textcolor{black}{Defining, for $z\in \Sigma$, }
\begin{equation}\label{eq:udeltaeps-z}
\textcolor{black}{U_{\delta, \epsilon}(z) = \{ x \in U_{\delta, \epsilon} : p_\Sigma (x) = z \}}
\end{equation}
\textcolor{black}{where $p_\Sigma$ is the closest point mapping associated with $\Sigma$, 
we have $U_{\delta,\epsilon} = \cup_{z \in \Sigma } U_{\delta,\epsilon}(z)$}. Note that 
$U_{\delta, \epsilon}(z) = U_{\delta,\epsilon} \cap p_\Sigma^{-1}(z) \subset U_{\delta_0}(\Sigma)\cap p_\Sigma^{-1}(z) $, which is a subset of the $2$ dimensional normal space $N_\Sigma(z)$ to the $d-2$ dimensional tangent space $T_\Sigma(z)$ of $\Sigma$ at $z$. In the case $d=2$, $\Sigma$ consists of distinct points and in that case $U_{\delta,\epsilon} \subset U_{\delta_0}(z) \subset p_\Sigma^{-1} (z)$, for $\delta_0$ small enough. Finally, let
\begin{equation}\label{eq:Udelta}
U_\delta = U_\delta (\bd) \cup U_{\delta,\epsilon}
\end{equation}

\begin{figure}
\centering
\includegraphics[scale=1.0]{./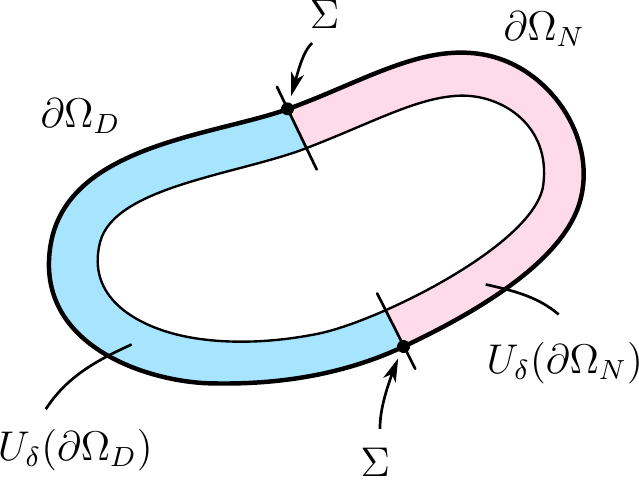}
\hspace{1cm}
\includegraphics[scale=1.0]{./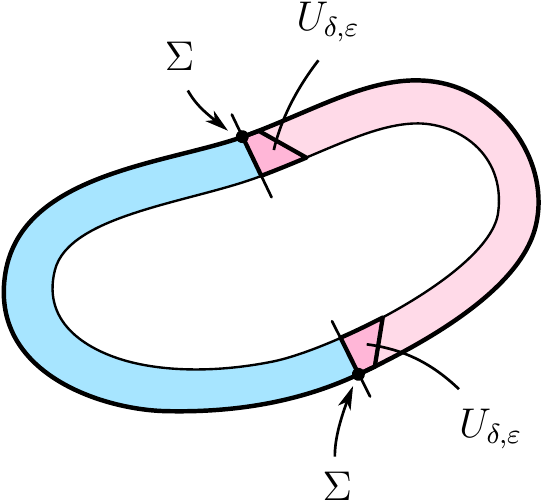}
\caption{Left: the Dirichlet boundary $\partial \Omega_D$, the Neumann boundary $\partial \Omega_N$, the interface $\Sigma$, and the tubular neighborhood $U_\delta(\partial \Omega) = U_\delta(\partial \Omega_D) \cup U_\delta(\partial \Omega_N)$. Right: the set $U_{\delta,\epsilon}\subset U_\delta(\partial \Omega_N)$.}
\label{fig:a}
\end{figure}
\begin{figure}
\includegraphics[scale=1.0]{./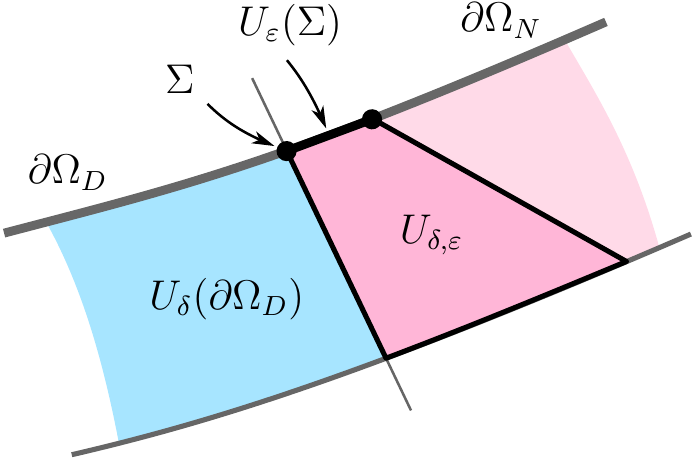}
\hspace{1cm}
\includegraphics[scale=1.0]{./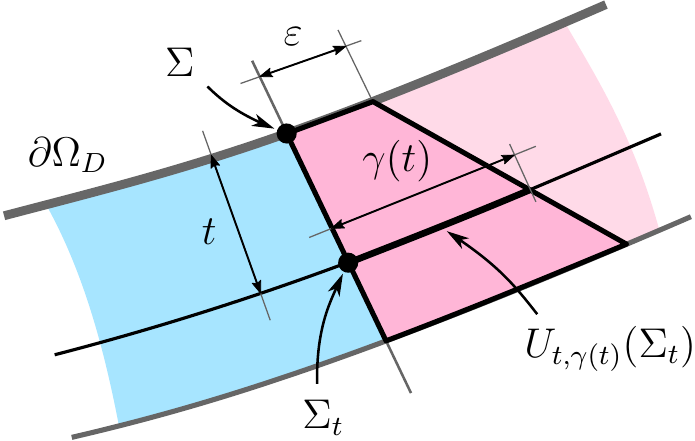}
\caption{Left: Close up of the set $U_{\delta,\epsilon}$ including $U_\epsilon(\Sigma) 
\subset \partial \Omega_N$. Right: The set $\Sigma_t$ and $U_{t,\gamma}(t)(\Sigma_t)$.} 
\label{fig:b}
\end{figure}

\paragraph{The Cut Off Function.}
We will below take $\delta \sim h$ and $\epsilon \sim h^\alpha$ with $\alpha = d$. Let 
$\chi:\Omega \rightarrow [0,1]$ be smooth such that
\begin{equation}\label{eq:cutoff}
\begin{cases}
\chi = 1 \quad \text{on $\bd$}
\\
\chi = 0 \quad \text{on $\bn  \setminus U_{\epsilon}(\Sigma)$}
\\ 
\chi = 0 \quad \text{on $\Omega \setminus U_\delta$}
\end{cases}
\qquad 
\begin{cases}
\|\nabla \chi \|_{L^\infty(U_\delta \setminus U_{\delta,\epsilon})} \lesssim \delta^{-1}
\\
\|\nabla_n \chi \|_{L^\infty(U_{\delta,\epsilon})} \lesssim \delta^{-1}
\\
\| \nabla_{\Sigma} \chi \|_{L^\infty(U_{\delta,\epsilon})} \lesssim 1
\\
\| \nabla_{\nu} \chi \|_{L^\infty(U_{t,\gamma(t)})} \lesssim (\gamma(t))^{-1}\quad t \in [0,\delta]
\end{cases}
\end{equation}

Observe that in the definition above $\nabla_\Sigma$ denotes the
projection of the gradient on the tangent plane of $\Sigma$. By the
construction of $\chi$, $\| \nabla_{\Sigma} \chi \|_{L^\infty(U_{\delta,\epsilon})}$ 
is bounded and depends only on $\epsilon$, $\delta$ and the regularity of $\Sigma$. 

\begin{lem}
The cut off function $\chi$ satisfies the following estimate 
\begin{equation}\label{eq:cutoff-bound}
\sup_{z\in \Sigma} \| \nabla_\nu \chi \|^2_{ U_{\delta,\epsilon}(z)}  
\lesssim 
|\ln (1+\delta/\epsilon)|
\end{equation}
and with   
\begin{equation}\label{eq:eps-delta}
\delta \sim h, \qquad \epsilon \sim h^\alpha 
\end{equation}
for $ 1 \leq  \alpha \lesssim 1$ we obtain
\begin{equation}\label{eq:cutoff-bound-b}
\| \nabla_\nu \chi \|^2_{ U_{\delta,\epsilon}}  
\lesssim 
1+ |\ln(h)|
\end{equation}
\end{lem}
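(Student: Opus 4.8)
The plan is to localize to a single fiber of the closest-point map $p_\Sigma$, reduce the squared norm over $U_{\delta,\epsilon}(z)$ to an elementary one-dimensional integral in the depth variable, and then recombine over $\Sigma$. First I would fix $z \in \Sigma$ and recall that, by the construction in \eqref{eq:Udeltaepsilon}--\eqref{eq:udeltaeps-z}, the fiber $U_{\delta,\epsilon}(z)$ is the two-dimensional set obtained by stacking the slices $U_{t,\gamma(t)}(\Sigma_t) \cap p_\Sigma^{-1}(z)$ for $t \in [0,\delta]$, where $\gamma(t) = t + \epsilon$. Introducing on $U_{\delta,\epsilon}(z)$ coordinates $(t,r)$, with $t = \rho$ the distance to $\partial\Omega$ and $r$ the conormal arclength measured from $\Sigma_t$ inside $\partial\Omega_{N,t}$, each slice becomes an interval of length $\gamma(t)$ and the area element is comparable to $dr\,dt$ because $\partial\Omega$ and $\Sigma$ are smooth and $\delta_0$ is small. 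Hence $\| \nabla_\nu \chi \|^2_{U_{\delta,\epsilon}(z)} \lesssim \int_0^\delta \int_0^{\gamma(t)} |\nabla_\nu \chi|^2 \, dr\,dt$.

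Next I would insert the pointwise bound from the last line of \eqref{eq:cutoff}, namely $|\nabla_\nu \chi| \lesssim \gamma(t)^{-1} = (t+\epsilon)^{-1}$ on $U_{t,\gamma(t)}$, so that the inner integral contributes a factor $\gamma(t)\cdot\gamma(t)^{-2} = (t+\epsilon)^{-1}$ and
\[
\| \nabla_\nu \chi \|^2_{U_{\delta,\epsilon}(z)} \;\lesssim\; \int_0^\delta \frac{dt}{t+\epsilon} \;=\; \ln\!\Big(\frac{\delta+\epsilon}{\epsilon}\Big) \;=\; \ln(1+\delta/\epsilon),
\]
uniformly in $z$, which is \eqref{eq:cutoff-bound}. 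For \eqref{eq:cutoff-bound-b} I would integrate this over $z \in \Sigma$: when $d \geq 3$, $\Sigma$ is a compact $(d-2)$-manifold and a Fubini/coarea decomposition of $U_{\delta,\epsilon}$ into its fibers (again with Jacobians bounded above and below) gives $\| \nabla_\nu \chi \|^2_{U_{\delta,\epsilon}} \lesssim |\Sigma| \sup_{z\in\Sigma} \| \nabla_\nu \chi \|^2_{U_{\delta,\epsilon}(z)} \lesssim \ln(1+\delta/\epsilon)$, while when $d = 2$ the set $\Sigma$ is finite and one simply sums the fiberwise bound over its points. Finally, substituting the scaling \eqref{eq:eps-delta} gives $\delta/\epsilon \sim h^{1-\alpha}$, hence $\ln(1+\delta/\epsilon) \lesssim 1 + |\ln h|$ since $1 \le \alpha \lesssim 1$, which is the claimed bound.

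The routine parts here are the one-dimensional integral and the substitution of the scaling; the one point that needs care is the geometry, i.e. checking that the change of variables from $U_{\delta,\epsilon}(z)$ to the region $\{0 \le t \le \delta,\ 0 \le r \le t+\epsilon\}$, and the subsequent splitting of $U_{\delta,\epsilon}$ along the fibers of $p_\Sigma$, have Jacobians uniformly bounded from above and below, independently of $h$, $\delta$ and $\epsilon$. This is where the smoothness of $\partial\Omega$ and of the interface $\Sigma$, together with the smallness of $\delta_0$ and of $\gamma_0$ (ensuring the closest-point maps $p_{\Sigma_t}$ are all well defined for $t \in [0,\delta_0]$), enter; once these Jacobian bounds are in place the normal ($\rho$) and conormal ($\nu$) directions decouple cleanly and the estimate reduces to the integral computed above.
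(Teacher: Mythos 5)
Your proof is correct and follows essentially the same route as the paper: insert the pointwise bound $|\nabla_\nu\chi|\lesssim(t+\epsilon)^{-1}$ on each slice of length $\sim(t+\epsilon)$, integrate in $t$ to get $\ln(1+\delta/\epsilon)$, and then substitute the scaling $\delta\sim h$, $\epsilon\sim h^\alpha$. Your extra care with the fiberwise Fubini/Jacobian bounds and the integration over $\Sigma$ simply makes explicit what the paper leaves implicit in its one-line conclusion for the second estimate.
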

\begin{proof} Using the bounds for $\nabla_\nu \chi$ we obtain
\begin{align} \nonumber
\|\nabla_\nu \chi \|^2_{U_{\delta,\epsilon}(z)} 
&=\int_{U_{\delta,\epsilon}(z)} | \nabla_\nu \chi |^2
 \lesssim \int_0^\delta \int_{U_{t,t+\epsilon}(z)} (t+\epsilon)^{-2} 
\\ \nonumber
&\qquad  \lesssim \int_0^\delta (t+\epsilon)^{-1} 
=[\ln(t+\epsilon)]^\delta_0
=\ln (1+\delta/\epsilon)
\end{align}
Estimate (\ref{eq:cutoff-bound-b}) follows directly from the definition 
of $\delta$ and $\epsilon$. 
\end{proof}

\subsection{The Regularized Problem}
For $\epsilon \in (0,\epsilon_0]$ define the regularized form 
\begin{align}\label{eq:Ah-eps}
A_{h,\epsilon}(v,w) 
&= (\nabla v, \nabla w)_\Omega
- (\nabla_n v,w)_{\chi,\partial \Omega} 
- (v, \nabla_n w)_{\partial \Omega_D} 
+\beta h^{-1} (v,w)_{\partial \Omega}
\end{align}
and define $A_{h,0} = A_h$. We will show that the mapping 
$[0,\epsilon_0]  \ni \epsilon \mapsto A_{h,\epsilon}$ is continuous for $\epsilon$ small enough, 
see Lemma \ref{lem:form-error} below for details.

For $\epsilon \in [0,\epsilon_0]$ define the regularized finite element method: 
find  $u_{h,\epsilon} \in V_{h}$ such that 
\begin{equation}\label{eq:method-eps}
A_{h,\epsilon}(u_{h,\epsilon},v)  + s_h(u_h,v) = L_h(v)\qquad \forall v \in V_{h}
\end{equation} 
This method is not consistent, but we have the identity
\begin{align}
A_{h,\epsilon}(u - u_{h,\epsilon}, v) 
&= A_{h,\epsilon}(u, v) - L_h(v) + s_h(u_h,v)
\\ \label{eq:residual-regularized}
&= s_h(u_h,v) -(g_N,v)_{\chi,\bn} \qquad \forall v \in V_h
\end{align}
since using Green's formula gives
\begin{align}
&A_{h,\epsilon}(u,v) = (\nabla u, \nabla v)_\Omega
- (\nabla_n u,v)_{\chi,\partial \Omega} 
- (u, \nabla_n v)_{\partial \Omega_D} 
+\beta h^{-1} (u,v)_{\partial \Omega}
\\
&= -(\Delta u, v)_\Omega + (\nabla_n u , v)_{\partial \Omega}
- (\nabla_n u,v)_{\chi,\partial \Omega} 
- (u, \nabla_n v)_{\partial \Omega_D} 
+\beta h^{-1} (u,v)_{\partial \Omega}
\\
&=(f,v)_\Omega - (g_D,\nabla_n v)_{\partial \Omega_D} 
+ \beta h^{-1}  (g_D,v)_{\partial \Omega_D}  + (g_N,v)_{\partial \Omega_N}
- (g_N,v)_{\chi,\partial \Omega_N}
\\
&=L_h(v) - (g_N,v)_{\chi,\partial \Omega_N}
\end{align}
where we used the fact that $\chi = 1$ on $\partial \Omega_D$ to conclude that 
\begin{align}
(\nabla_n u , v)_{\partial \Omega}
- (\nabla_n u,v)_{\chi,\partial \Omega} 
&= (\nabla_n u , v)_{\partial \Omega_N}
- (\nabla_n u,v)_{\chi,\partial \Omega_N}
\\
&=(g_N,v)_{\partial \Omega_N} - (g_N, v)_{\chi,\partial \Omega_N}
\end{align}

\subsection{Properties of the Bilinear Forms}

We here summarize the basic results on the bilinear forms and 
conclude with a proof of existence, uniqueness, and stability of the 
finite element solutions.  

\paragraph{Inverse and Trace Inequalities.} Let us recall some inverse and trace
inequalities. Here $\mathbb{P}_1(T)$ denotes the set of polynomials of
degree less than or equal to $1$ on the simplex $T$.
\begin{itemize}
\item Inverse 
inequalities (see \cite[Section 1.4.3]{DiPE12}),
\begin{equation}\label{inverse}
\|\nabla v\|_{H^1(T)} \lesssim h^{-1}_T \|v\|_{L^2(T)}\quad \forall v \in \mathbb{P}_1(T)
\end{equation}
and
\begin{equation}\label{Linfty}
\|v\|_{L^{\infty}(T)} \lesssim h^{-\frac{d}{2}} \|v\|_{L^2(T)}\quad \forall v \in \mathbb{P}_1(T)
\end{equation}
\item Trace inequalities (see \cite[Section 1.4.3]{DiPE12}),
\begin{equation}\label{trace_H1}
\|v\|_{L^2(\partial T)} \leq C_T \left( h_T^{-{1/2}} \|v\|_{L^2(T)} +
h_T^{{1/2}} \| \nabla v \|_{T}\right)\quad \forall v \in H^1(T)
\end{equation}
and 
\begin{equation}\label{trace_P1}
\|v\|_{L^2(\partial T)} \leq C_t h_T^{-{1/2}} \|v\|_{L^2(T)} \quad\forall v \in \mathbb{P}_1(T)
\end{equation}
\item Inverse trace inequality on cut elements. For a simplex $T$ such that $T \cap \partial \Omega \ne \emptyset$, there holds
\begin{equation}\label{trace_P1_cut}
\|v\|_{L^2(T \cap \partial \Omega)} \lesssim h_T^{-{1/2}} \|v\|_{L^2(T)} \quad\forall v \in \mathbb{P}_1(T)
\end{equation}
%
\end{itemize}

\paragraph{Stabilization Estimates.}
For any two elements $T_1$ and $T_2$ in $\mcT_h$, sharing a face $F$, we have the 
estimate   
\begin{equation}\label{eq:inverse-jumps-pair}
\| \nabla^m v \|^2_{T_1} \lesssim \| \nabla^m v \|^2_{T_2} + h^{3-2m} \| [\nabla_n v ] \|^2_F 
\qquad m=0,1, \quad v \in V_h
\end{equation}
Repeated use 
of (\ref{eq:inverse-jumps-pair}) leads to
\begin{equation}\label{eq:inverse-jumps}
\| \nabla v \|^2_{\mcT_h} \lesssim \| \nabla v \|^2_\Omega + \| v \|^2_{s_h} \qquad v \in V_h
\end{equation}
For sets $\omega_0 \subset \omega_1 \subset \Omega$ such that 
$\mbox{diam}(\omega_1 \setminus \omega_0) \lesssim h$, we may also 
derive the estimate 
\begin{equation}\label{eq:L2inverse_jumps}
\|\nabla^m  v \|^2_{\mcT_h(\omega_1)} 
\lesssim \| \nabla^m v \|^2_{\mcT_h(\omega_0)} 
+ h^{3-2m} \| [\nabla_n v ] \|^2_{\mcF_h(\omega_1)}  
\qquad m=0,1, \quad v \in V_h
\end{equation}
where $\mcF_h(\omega_1)$ denotes the interior faces of $\mcT_h(\omega_1)$. 

\paragraph{The Energy Norm.} We equip the finite element space $V_h$ with the 
energy norm
\begin{align}\label{eq:energy-norm}
\tn v \tn^2_{h} 
&= \| \nabla v \|^2_\Omega 
+ \| v \|^2_{s_h} + h^{-1} \| v \|^2_{\bd}
\end{align}
where $\| v \|^2_{s_h}:=s_h(v,v)$. In order to have the normal flux well defined on the Dirichlet boundary we assume that 
\begin{equation}\label{eq:defV}
v \in V = \{ v \in H^1(\Omega) : \Delta v |_{U_{\delta_0}} \in L^2(U_{\delta_0})\}
\end{equation}
where we recall, see (\ref{eq:Udelta}), that $\text{supp}(\chi) 
\subset U_{\delta_0} = U_{\delta_0} (\partial \Omega_D ) \cup U_{\delta_0,\epsilon_0}$ 
for all regularization parameters $\epsilon \in [0,\epsilon_0]$. The stabilization form $s_h$ 
is not defined on $V$, due to the low regularity, and therefore we equip  $V$ 
with the weaker energy norm 
\begin{equation}\label{eq:energy-norm-nostab}
\tn v \tn^2
= \| \nabla v \|^2_\Omega 
+ h^{-1} \| v \|^2_{\bd}
\end{equation}

\begin{lem} There is constant such that for all $v  \in V + V_h$, $w \in V_h$, and 
$\epsilon \in [0,\epsilon_0]$, 
\begin{empheq}[box=\widefbox]{equation}\label{eq:continuity}
A_{h,\epsilon}(v,w) 
\lesssim \tn v \tn \, \tn w \tn_{h} 
+ |(\nabla_n v, w)_{\chi,\partial \Omega} |
\end{empheq}
where we use the norm $\tn \cdot \tn$, which does not include the stabilization, 
on $V+V_h$. 
\end{lem}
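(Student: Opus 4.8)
The plan is to estimate $A_{h,\epsilon}(v,w)$ term by term, following the decomposition \eqref{eq:Ah-eps}, leaving the weighted flux contribution $-(\nabla_n v,w)_{\chi,\partial\Omega}$ untouched --- it reappears verbatim as the second term on the right-hand side of \eqref{eq:continuity} --- and bounding the other three terms by $\tn v\tn\,\tn w\tn_h$. The gradient term is immediate by Cauchy--Schwarz, $(\nabla v,\nabla w)_\Omega\le\|\nabla v\|_\Omega\|\nabla w\|_\Omega\le\tn v\tn\,\tn w\tn_h$. The boundary penalty term is handled by Cauchy--Schwarz on $\partial\Omega_D$ and an even split of the factor $h^{-1}$: since $h^{-1/2}\|v\|_{\partial\Omega_D}\le\tn v\tn$ and $h^{-1/2}\|w\|_{\partial\Omega_D}\le\tn w\tn_h$ follow directly from the norm definitions \eqref{eq:energy-norm-nostab} and \eqref{eq:energy-norm}, this term is $\lesssim\tn v\tn\,\tn w\tn_h$; the same splitting also gives $\|v\|_{\partial\Omega_D}\le h^{1/2}\tn v\tn$, used below.

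The step requiring most care is the term $-(v,\nabla_n w)_{\partial\Omega_D}$, where the discreteness of $w$ must be exploited. As $w\in V_h$ is piecewise affine, $\nabla w$ is piecewise constant, so $\nabla_n w$ is a genuine pointwise-defined function on $\partial\Omega_D$ and Cauchy--Schwarz gives $|(v,\nabla_n w)_{\partial\Omega_D}|\le\|v\|_{\partial\Omega_D}\|\nabla_n w\|_{\partial\Omega_D}$. I would bound $\|\nabla_n w\|_{\partial\Omega_D}$ by applying the cut inverse trace inequality \eqref{trace_P1_cut} componentwise to $\nabla w$ (together with $|\nabla_n w|\le|\nabla w|$ pointwise and quasi-uniformity) to get $\|\nabla_n w\|^2_{\partial\Omega_D}\lesssim h^{-1}\|\nabla w\|^2_{\mcT_h(\partial\Omega_D)}$, and then, since $\mcT_h(\partial\Omega_D)\subset\mcT_h$, the stabilization estimate \eqref{eq:inverse-jumps} to transfer the gradient from the extended active mesh (whose cut elements may be almost entirely outside $\Omega$) back to the physical domain, $\|\nabla w\|^2_{\mcT_h(\partial\Omega_D)}\le\|\nabla w\|^2_{\mcT_h}\lesssim\|\nabla w\|^2_\Omega+\|w\|^2_{s_h}\le\tn w\tn^2_h$. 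Hence $\|\nabla_n w\|_{\partial\Omega_D}\lesssim h^{-1/2}\tn w\tn_h$, so $|(v,\nabla_n w)_{\partial\Omega_D}|\lesssim h^{1/2}\tn v\tn\cdot h^{-1/2}\tn w\tn_h=\tn v\tn\,\tn w\tn_h$, and summing the four contributions yields \eqref{eq:continuity}. The bound is uniform in $\epsilon\in[0,\epsilon_0]$ because $\epsilon$ enters $A_{h,\epsilon}$ only through $\chi$, which appears solely in the term left on the right-hand side.

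The real obstacle is not technical but structural, and it is precisely why the lemma is stated with the extra flux term: since $v$ ranges over $V+V_h$, its rough part has only $H^1(\Omega)$ regularity --- the hypothesis $\Delta v|_{U_{\delta_0}}\in L^2$ serves only to make $(\nabla_n v,w)_{\chi,\partial\Omega}$ well defined through a localized version of \eqref{eq:flux_def}, which suffices because $\supp(\chi)\subset U_{\delta_0}$ --- so no trace inequality for $\nabla v$ is at hand and the weighted flux term simply cannot be absorbed into $\tn v\tn\,\tn w\tn_h$. It must therefore be carried along and dealt with later by the weighted $H^{-1/2}$--$H^{1/2}$ duality on the $\epsilon$-tube around $\partial\Omega_D$, where the logarithmic factor from the cut-off lemma enters. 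The only thing to check in the present estimate is that in each of the three remaining terms every factor multiplying the rough function $v$ is either the discrete function $w$, controlled robustly in the cut geometry by the inverse and stabilization inequalities, or the trace of $v$ on $\partial\Omega_D$, which is already built into $\tn\cdot\tn$.
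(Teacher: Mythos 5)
Your proposal is correct and follows essentially the same route as the paper's proof: Cauchy--Schwarz term by term in the decomposition of $A_{h,\epsilon}$, the weighted flux term $(\nabla_n v,w)_{\chi,\partial\Omega}$ carried along unchanged, and the key factor $h^{1/2}\|\nabla_n w\|_{\partial\Omega_D}$ controlled by the cut inverse trace inequality \eqref{trace_P1_cut} followed by the stabilization estimate \eqref{eq:inverse-jumps}, exactly as in \eqref{eq:inverse-normal}. The only difference is expository detail (e.g.\ your remark on why the flux term cannot be absorbed), not substance.
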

\begin{proof}
To verify this estimate we start from the definition (\ref{eq:Ah-eps}) 
of the regularized form and using the Cauchy Schwarz inequality we get 
\begin{align}
A_{h,\epsilon}(v,w) 
&\lesssim \|\nabla v\|_\Omega  \|\nabla w\|_\Omega 
+ |(\nabla_n v,w)_{\chi,\partial \Omega}| 
\\ 
&\qquad 
+ h^{-1/2} \|v\|_{\partial \Omega_D} h^{1/2} \| \nabla_n w\|_{\partial \Omega_D} 
+\beta h^{-1} \|v\|_{\partial \Omega} \|w\|_{\partial \Omega}
\\
&\lesssim \tn v \tn \, \tn w \tn_h + |(\nabla_n v,w)_{\chi,\partial \Omega}| 
\end{align}
We estimated $h^{1/2} \| \nabla_n w\|_{\partial \Omega_D}$, with $w\in V_h$,  using 
the inverse inequality 
\begin{equation}\label{eq:inverse-normal}
h \| \nabla_n w \|^2_{\bd} 
\lesssim \| \nabla v \|^2_{\mcT_h(\bd)}
\lesssim \| \nabla w \|^2_{\mcT_h} 
\lesssim \| \nabla w \|^2_\Omega + \| w \|^2_{s_h}
\lesssim \tn w \tn^2_h
\end{equation}
where we first used the inverse trace inequality (\ref{trace_P1_cut}) and 
then the stabilization estimate (\ref{eq:inverse-jumps}).
\end{proof}

We will now prove a bound on the error introduced by replacing $A_h$
by its regularized counterpart $A_{h,\epsilon}$.
\begin{lem}\label{lem:form-error}
There is a constant such that for all $v,w\in V_h$, and $\epsilon \in [0,\epsilon_0]$ 
with $\epsilon_0 \sim h$, 
\begin{empheq}[box=\widefbox]{equation}\label{eq:form-error}
|A_{h,\epsilon}(v,w) - A_h(v,w )| \lesssim \epsilon h^{1-d} \tn v \tn_{h} \tn w \tn_{h}
\end{empheq}
\end{lem}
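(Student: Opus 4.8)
The difference between the two forms is
$A_{h,\epsilon}(v,w) - A_h(v,w) = (\nabla_n v, w)_{\partial\Omega} - (\nabla_n v, w)_{\chi,\partial\Omega} = ((1-\chi)\nabla_n v, w)_{\partial\Omega}$,
and since $\chi = 1$ on $\partial\Omega_D$ and, by the construction \eqref{eq:cutoff}, $\chi = 0$ on $\partial\Omega_N \setminus U_\epsilon(\Sigma)$, the integrand is supported in $U_\epsilon(\Sigma) \subset \partial\Omega_N$. So the plan is: first rewrite the difference as $((1-\chi)\nabla_n v, w)_{U_\epsilon(\Sigma)}$, apply Cauchy--Schwarz on that set, bound $0 \le 1-\chi \le 1$, and reduce matters to estimating $\|\nabla_n v\|_{U_\epsilon(\Sigma)}$ and $\|w\|_{U_\epsilon(\Sigma)}$ for $v,w \in V_h$.

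Next I would exploit that $U_\epsilon(\Sigma)$ is a tubular neighbourhood of the $(d-2)$-dimensional interface $\Sigma$ of width $\epsilon$ inside the $(d-1)$-dimensional surface $\partial\Omega_N$, hence has surface measure $\lesssim \epsilon$ (the factor $\epsilon^{d-1}$... no: $\epsilon$ in one transverse direction times an $O(1)$ extent along $\Sigma$, giving $|U_\epsilon(\Sigma)| \lesssim \epsilon$). The set of cut elements $T \in \mcT_h(U_\epsilon(\Sigma))$ meeting it is a small collection; on each such $T$ the inverse trace inequality \eqref{trace_P1_cut} gives $\|w\|_{L^2(T\cap\partial\Omega)} \lesssim h^{-1/2}\|w\|_{L^2(T)}$ and the inverse inequality \eqref{inverse} gives $\|\nabla_n v\|_{L^2(T\cap\partial\Omega)} \lesssim h^{-1/2}\|\nabla v\|_{L^2(T)} \lesssim h^{-3/2}\|v\|_{L^2(T)}$. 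However, to pick up the extra power of $\epsilon$ I need the $L^\infty$--$L^2$ comparison: on a $\mathbb{P}_1$ function, $\|w\|_{L^2(U_\epsilon(\Sigma))} \lesssim |U_\epsilon(\Sigma)|^{1/2}\|w\|_{L^\infty} \lesssim \epsilon^{1/2} h^{-d/2}\|w\|_{L^2(T_w)}$ using \eqref{Linfty}, where $T_w$ ranges over the $O(1)$ elements involved; similarly $\|\nabla_n v\|_{L^2(U_\epsilon(\Sigma))} \lesssim \epsilon^{1/2}\|\nabla v\|_{L^\infty} \lesssim \epsilon^{1/2} h^{-d/2} h^{-1}\|v\|_{L^2(T_v)}$, using \eqref{inverse} then \eqref{Linfty}. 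Multiplying the two bounds yields $\epsilon\, h^{-d-1}\|v\|_{L^2}\|w\|_{L^2}$; finally a standard $L^2$ inverse estimate $\|v\|_{L^2(T)} \lesssim h\|\nabla v\|_{L^2(\omega)}$ valid on the active cut mesh near the boundary — obtained from \eqref{eq:L2inverse_jumps} with $m=0$ together with the fact that these elements abut the Dirichlet data control $h^{-1}\|v\|_{\partial\Omega_D}$ — converts $h^{-d-1}\|v\|_{L^2}\|w\|_{L^2}$ into $h^{1-d}\tn v\tn_h \tn w\tn_h$, giving \eqref{eq:form-error}.

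The main obstacle is getting the dependence on the \emph{diameter} $h$ and on the width $\epsilon$ correct simultaneously and showing that $h^{-d-1}\|v\|_{L^2(\text{few elements})}$ can legitimately be absorbed into $\tn\cdot\tn_h$ with only the loss of $h^{1-d}$; a naive use of \eqref{trace_P1_cut} on each of the $O(1)$ cut elements overlooks the gain $|U_\epsilon(\Sigma)|\lesssim \epsilon$ coming from the smallness of the tube, so the argument must pass through the $L^\infty$ bound \eqref{Linfty} on the simplices meeting $U_\epsilon(\Sigma)$ rather than through the trace inequality directly. Care is also needed to control $\|v\|_{L^2}$ on a cut element by $\nabla v$ plus the Dirichlet-boundary term in $\tn\cdot\tn_h$ (a Poincaré-type argument on cut elements); once that is in place the estimate follows by Cauchy--Schwarz.
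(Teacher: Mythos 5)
Your overall strategy is the same as the paper's: localize the difference of the forms to the tube $U_\epsilon(\Sigma)\subset\bn$, use $|U_\epsilon(\Sigma)|\lesssim\epsilon$ together with the $L^\infty$ inverse inequality \eqref{Linfty} to trade the boundary $L^2$ norms for volume norms at the price of $\epsilon^{1/2}h^{-d/2}$, and return to $\tn\cdot\tn_h$ through a cut-element Poincar\'e inequality on a patch enlarged so that it meets $\bd$ in a set of measure $\sim h^{d-1}$ (the paper's $\widetilde{\mcT}_h(U_\epsilon(\Sigma))$) together with the stabilization estimate \eqref{eq:inverse-jumps}. One slip must be fixed: your opening identity is wrong. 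From \eqref{eq:Ah} and \eqref{eq:Ah-eps} the consistency terms give $A_{h,\epsilon}(v,w)-A_h(v,w)=(\nabla_n v,w)_{\bd}-(\chi\nabla_n v,w)_{\partial\Omega}=-(\chi\nabla_n v,w)_{\bn}$, since $\chi=1$ on $\bd$; it is not $((1-\chi)\nabla_n v,w)_{\partial\Omega}$. With your formula the support claim fails, because $1-\chi\equiv 1$ on $\bn\setminus U_\epsilon(\Sigma)$, and the $\epsilon$-gain that drives the whole estimate disappears. Your subsequent bounds tacitly use the correct, localized term $(\chi\nabla_n v,w)_{U_\epsilon(\Sigma)}$ (via \eqref{eq:Ugamsig}), so this is a correctable complement/sign confusion rather than a conceptual gap, but it has to be stated correctly.

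Two further remarks. First, you distribute the gains $\epsilon^{1/2}h^{-d/2}$ over \emph{both} factors, whereas the paper extracts them only from the $w$-factor and treats $h^{1/2}\|\nabla_n v\|_{U_\epsilon(\Sigma)}$ by the cut trace inequality \eqref{trace_P1_cut} and \eqref{eq:inverse-jumps} alone; as written that route yields the factor $(\epsilon h^{1-d})^{1/2}$, so your symmetric split is in fact what reproduces the stated rate $\epsilon h^{1-d}$ in full. Second, your detour for the $v$-factor, passing down to $\|v\|_{L^2}$ by an inverse inequality and then back up by a Poincar\'e inequality, is unnecessary (the two steps cancel): after $\|\nabla_n v\|_{U_\epsilon(\Sigma)}\lesssim\epsilon^{1/2}h^{-d/2}\|\nabla v\|_{\mcT_h(U_\epsilon(\Sigma))}$ you can bound $\|\nabla v\|_{\mcT_h}\lesssim\tn v\tn_h$ directly via \eqref{eq:inverse-jumps}; the patch Poincar\'e argument is only needed for the $w$-factor, where $h^{-d/2}\|w\|_{\mcT_h(\mathrm{patch})}\lesssim h^{1-d/2}\bigl(h^{-1/2}\|w\|_{\bd}+\|\nabla w\|_{\mcT_h}\bigr)\lesssim h^{1-d/2}\tn w\tn_h$, and there the enlargement of the patch so that it actually touches $\bd$ is essential, as you note. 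Finally, your repeated ``$O(1)$ elements'' is literally true only for $d=2$ (for $d\geq 3$ the tube around the $(d-2)$-dimensional $\Sigma$ meets $O(h^{2-d})$ elements), but this is harmless: the $L^\infty$ step is a maximum over elements, and the patch norms are ultimately absorbed into global quantities through the jump stabilization, which is also what legitimizes working on cut elements in the first place.
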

\begin{proof} Using the definitions (\ref{eq:Ah}) and (\ref{eq:Ah-eps}) of the forms 
$A_h$ and $A_{h,\epsilon}$ we obtain
\begin{align}
|A_h(v,w) - A_{h,\epsilon}(v,w)|
&= 
|(\nabla_n v, \chi w )_{\bn}|
\\ 
&\lesssim
h^{1/2}\|\nabla_n v\|_{U_\epsilon(\Sigma)} 
h^{-1/2}\| w \|_{U_\epsilon(\Sigma)}
\\ 
&\lesssim 
\epsilon h^{1-d} \tn v \tn_{h} \tn w \tn_{h}
\end{align}
where  we used the fact that $\text{supp}(\chi)\cap \partial \Omega_N \subset U_\epsilon(\Sigma)$, see (\ref{eq:Ugamsig}). To estimate $h \| \nabla_n v \|^2_{U_\epsilon(\Sigma)}$ we proceed in the same way as in (\ref{eq:inverse-normal}), 
we first use an inverse estimate and then the stablization (\ref{eq:inverse-jumps}),
\begin{align}
h \| \nabla_n v \|^2_{U_\epsilon(\Sigma)}  
\lesssim 
h \| \nabla_n v \|^2_{\mcT_h(U_\epsilon(\Sigma))\cap \bn}  
\lesssim
\| \nabla v \|^2_{\mcT_h(U_\epsilon(\Sigma))}
\lesssim
\| \nabla v \|^2_{\mcT_h}
\lesssim \tn v \tn^2_{1,h}
\end{align}
Next to estimate $h^{-1} \| v \|^2_{U_\epsilon(\Sigma)}$ we pass over to the $L^\infty$ 
norm in order to extract an $\epsilon$ factor and then we use suitable inverse bounds to 
pass to the energy norm.
\begin{align} 
h^{-1} \| v \|^2_{U_\epsilon(\Sigma)} 
&\lesssim h^{-1} \epsilon \| v \|^2_{L^\infty(U_\epsilon(\Sigma))} 
\\ 
&\lesssim h^{-1} \epsilon \| v \|^2_{L^\infty(\mcT_h(U_\epsilon(\Sigma)))} 
\\ 
&\lesssim h^{-1} \epsilon h^{-d} \| v \|^2_{\mcT_h(U_\epsilon(\Sigma))}
\\ \label{eq:eqv-bb}
&\lesssim h^{-1} \epsilon h^{-d} \Big( h\| v \|^2_{\bd 
\cap {\widetilde{\mcT}_h(U_\epsilon(\Sigma))}
} 
+ h^2 \| \nabla v \|^2_{\widetilde{\mcT}_h(U_\epsilon(\Sigma))}\Big)
\\ 
&\lesssim  \epsilon h^{1-d} \Big( h^{-1}\| v \|^2_{\bd} 
+ \| \nabla v \|^2_{{\mcT}_h} \Big)
\\ \label{eq:eqv-cc}
&\lesssim \epsilon h^{1-d} \tn v \tn^2_{h} 
\end{align}
Here $\widetilde{\mcT}_h(U_\epsilon(\Sigma))$ is a slightly larger patch of elements 
such that the $d-1$ dimensional measure of its intersection with the Dirichlet boundary 
satisfies $|\widetilde{\mcT}_h (U_\epsilon(\Sigma)) \cap \bd |  \sim h^{d-1}$ ,which allows 
us to utilize the control available in $\tn v \tn_h$ at the Dirichlet boundary and to employ 
a Poincar\'e inequality in (\ref{eq:eqv-bb}), see the appendix in \cite{BHL18}. The patch $\mcT_h(U_\epsilon(\Sigma))$ does not in general satisfy ${\mcT}_h (U_\epsilon(\Sigma)) \cap \bd  \sim h^{d-1}$ and therefore it is enlarged by adding a suitable number 
of face neighboring elements in $\mcT_h(\partial \Omega_D)$.  In the last 
step (\ref{eq:eqv-cc}) we also used the stabilization (\ref{eq:inverse-jumps}). 
Note that due to the assumption 
that $\epsilon \in [0,\epsilon_0]$ with $\epsilon_0 \sim h$ it follows from shape regularity 
that there is a uniform bound on the number of elements in $\widetilde{\mcT}_h (U_\epsilon(\Sigma))$.
\end{proof}

Lemma \ref{lem:form-error} is instrumental for the coercivity that we prove next.
\begin{lem}
For $\beta$ large enough and $\sigma>0$, the forms 
$
A_{h,\epsilon}+s_h$, $h \in (h,h_0]$, $\epsilon \in [0, c h^{d}]$ 
with $c$ small enough, are coercive 
\begin{empheq}[box = \widefbox]{equation}\label{eq:coercivity}
\tn v \tn^2_{h} \lesssim A_{h,\epsilon}(v,v) +s_h(v,v) \qquad v \in V_h
\end{empheq}
\end{lem}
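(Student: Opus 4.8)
The plan is to reduce the regularized coercivity estimate to the unregularized case $\epsilon=0$ and then to absorb the regularization as a lower-order perturbation. More precisely, I would first prove that $A_h+s_h=A_{h,0}+s_h$ is coercive on $V_h$ by a standard Nitsche argument, obtaining a constant $c_0>0$ that is independent of $h$ and of how $\partial\Omega$ cuts the mesh, and then invoke Lemma~\ref{lem:form-error} to control $|A_{h,\epsilon}(v,v)-A_h(v,v)|$ by $C_1\epsilon h^{1-d}\tn v\tn_h^2$, which for $\epsilon\le c h^{d}$ is at most $C_1 c h\tn v\tn_h^2$ and hence can be made smaller than $c_0/2$ by taking $c$ small (this range of $\epsilon$ is compatible with Lemma~\ref{lem:form-error}, since $c h^{d}\le c h$ lies below its threshold $\epsilon_0\sim h$).

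For the case $\epsilon=0$, I would expand, using \eqref{eq:Ah},
\begin{equation}
A_h(v,v)+s_h(v,v)=\|\nabla v\|_\Omega^2+\|v\|_{s_h}^2-2(\nabla_n v,v)_{\bd}+\beta h^{-1}\|v\|_{\bd}^2 ,
\end{equation}
so the only term with an indefinite sign is $-2(\nabla_n v,v)_{\bd}$. Applying Cauchy--Schwarz on $\bd$ and a weighted Young inequality gives, for any $\eta>0$,
\begin{equation}
2|(\nabla_n v,v)_{\bd}|\le \eta\, h\|\nabla_n v\|_{\bd}^2+\eta^{-1} h^{-1}\|v\|_{\bd}^2 ,
\end{equation}
and the crucial point is that $h\|\nabla_n v\|_{\bd}^2\lesssim \|\nabla v\|_\Omega^2+\|v\|_{s_h}^2$ by the chain \eqref{eq:inverse-normal}, whose constant $C_I$ depends only on shape regularity and on $\sigma$ (through the cut trace inequality \eqref{trace_P1_cut} and the stabilization estimate \eqref{eq:inverse-jumps}), but not on the position of the cut. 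Choosing $\eta$ with $\eta C_I=1/2$ and then $\beta$ large enough that $\beta-\eta^{-1}\ge 1/2$, one reads off $A_h(v,v)+s_h(v,v)\ge\tfrac12\tn v\tn_h^2$ from the definition \eqref{eq:energy-norm}. Combining this with the perturbation bound above and choosing $c$ so that $C_1 c h_0\le 1/4$ yields the claim uniformly in $h\in(0,h_0]$ and $\epsilon\in[0,ch^{d}]$.

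I do not anticipate a genuine obstacle here; the one thing to be careful about is the order of the quantifiers — $\beta$ must be fixed large in terms of $C_I$ (equivalently in terms of $\sigma$ and shape regularity) \emph{before} the threshold $c$ for $\epsilon$ is chosen small in terms of the resulting coercivity constant and $h_0$ — and the conceptual point worth emphasising is that it is precisely the stabilization $s_h$ that renders \eqref{trace_P1_cut}, and hence \eqref{eq:inverse-normal}, robust with respect to the mesh--boundary intersection, which is what keeps $\beta$ from having to blow up as the cut degenerates.
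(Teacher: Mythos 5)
Your proposal is correct and follows essentially the same route as the paper: establish coercivity of $A_{h,0}+s_h$ by the standard Nitsche argument, where the cut-robust control of $h\|\nabla_n v\|^2_{\bd}$ comes from \eqref{eq:inverse-normal} via \eqref{trace_P1_cut} and \eqref{eq:inverse-jumps}, and then absorb the regularization term using Lemma~\ref{lem:form-error} with $\epsilon\leq ch^{d}$. The paper merely leaves the $\epsilon=0$ Nitsche step as ``standard techniques,'' which you have written out correctly, including the right order of choosing $\beta$ before $c$.
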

\begin{proof} First we note that $A_{h,0}$ is coercive using standard techniques together 
with the inverse estimate (\ref{eq:inverse-normal}). Next using the
bound (\ref{eq:form-error}) of Lemma \ref{lem:form-error}, 
we obtain 
\begin{align}\nonumber
A_{h,\epsilon}(v,v) 
&= A_{h,0}(v,v) + A_{h,\epsilon}(v,v) - A_{h,0}(v,v)
\\ \nonumber
&\geq C_1 \tn v \tn_h^2 - |A_{h,\epsilon}(v,v) - A_{h,0}(v,v)|
\\ \nonumber
&\geq (C_1 - C_2 \epsilon h^{1-d}) \tn v \tn_h^2
\\ \nonumber
&\gtrsim \tn v \tn_h^2
\end{align}
where in the last step we choose $\epsilon \leq c h^{d}$ with $h \in (0,h_0]$ and 
$c$ small enough.
\end{proof}

Using Lax-Milgram we conclude that for each $\epsilon \in [0,c h^d]$, there 
is a unique solution $u_{h,\epsilon} \in V_h$ to the regularized problem 
(\ref{eq:method-eps}) such that 
\begin{equation}\label{eq:discrete-stab-bound}
\tn u_{h,\epsilon} \tn_h \lesssim \sup_{v \in V_h \setminus \{0\} } L_h(v) 
\lesssim 
\| f \|_{H^{-1}(\Omega)} 
+ \|g_N\|_{\widetilde{H}^{-1/2}(\bn)} 
+ h^{-1/2} \| g_D \|_{\bd}
\end{equation}

\subsection{Technical Lemmas}
In this section we collect some technical results that will be useful
in the analysis. More precisely we start with four technical lemmas 
before proving Lemma \ref{lem:trace-cont} which is used to estimate 
the problematic term $(\nabla_n v, w)_{\chi, \partial \Omega}$ in the 
regularized problem.

\begin{lem} \label{lem:sobolev_sigma} There is a constant such that for all $v \in V_h$,
\begin{equation}\label{eq:dscrete-sobolev}
\int_\Sigma \| v \|^2_{L^\infty(U_{\delta_0,\epsilon_0}(z))} \lesssim (1 + |\ln(h)|) \, \tn v \tn^2_{h}
\end{equation}
\end{lem}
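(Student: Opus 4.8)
The plan is to prove the discrete Sobolev-type inequality \eqref{eq:dscrete-sobolev} by reducing a fiberwise $L^\infty$ bound over the two-dimensional normal slices $U_{\delta_0,\epsilon_0}(z)$ to a one-dimensional Sobolev embedding, integrating over $z\in\Sigma$, and then converting the resulting $H^1$-type quantities on the active mesh into the energy norm $\tn\cdot\tn_h$ — the logarithmic factor entering through the same mechanism as in Lemma on the cut-off function, namely an integral of $(t+\epsilon)^{-1}$ over $t\in[0,\delta_0]$. Concretely, I would first fix $z\in\Sigma$ and work on the slice $\Omega_z := U_{\delta_0,\epsilon_0}(z) \subset N_\Sigma(z)$, a two-dimensional region of width $\gamma(t)=t+\epsilon_0$ at distance $t$ from the boundary. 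Here $v|_{\Omega_z}$ is (a trace of) a piecewise affine function, so on $\Omega_z$ it is at least in $H^1$; but a two-dimensional $H^1$ function is not in $L^\infty$ in general, hence the logarithm. The trick is to exploit that $v$ is discretely piecewise polynomial: combine the one-dimensional embedding $H^1(I)\hookrightarrow L^\infty(I)$ in the $\nu$-direction (the direction along $\partial\Omega_{N,t}$ transverse to $\Sigma_t$, of length $\sim\gamma(t)$) with control in the normal ($t$) direction coming from the $\bd$-boundary term and the gradient.

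The key steps, in order, are as follows. \emph{Step 1.} For fixed $z$ and fixed $t\in[0,\delta_0]$, on the one-dimensional fiber $U_{t,\gamma(t)}(z)\subset\partial\Omega_{N,t}$ of length $\sim\gamma(t)=t+\epsilon_0$, apply the scaled 1D Sobolev inequality $\|w\|_{L^\infty(I)}^2 \lesssim |I|^{-1}\|w\|_{L^2(I)}^2 + |I|\,\|\partial_\nu w\|_{L^2(I)}^2$ with $|I|\sim t+\epsilon_0$. \emph{Step 2.} Integrate this over $t\in[0,\delta_0]$. The term $|I|^{-1}\|w\|_{L^2}^2 \sim (t+\epsilon_0)^{-1}\|v\|_{L^2(\text{fiber})}^2$ integrates to something involving $\int_0^{\delta_0}(t+\epsilon_0)^{-1}\,dt \sim |\ln(\delta_0/\epsilon_0)| \sim |\ln h|$ times a supremum of $\|v\|_{L^2}^2$ on fibers, while the term $|I|\|\partial_\nu v\|_{L^2}^2$ integrates to a full two-dimensional $\|\nabla v\|_{L^2(\Omega_z)}^2$ without logarithm (since $t+\epsilon_0\lesssim\delta_0\lesssim 1$). \emph{Step 3.} To handle the $(t+\epsilon_0)^{-1}\|v\|_{L^2(\text{fiber})}^2$ factor cleanly, instead write $\|v(t,\cdot)\|^2_{L^2(\text{fiber})} \le \|v(0,\cdot)\|^2_{L^2} + \int_0^t \partial_s \|v(s,\cdot)\|^2_{L^2}\,ds$, i.e. use a fundamental-theorem-of-calculus / trace argument in the normal direction to bound the $L^2$ mass on an inner fiber by the $L^2$ mass on $\bn\cap(\text{slice})$ plus $\|\nabla v\|$-type terms — and since we only have control of $v$ on $\bd$ in $\tn\cdot\tn_h$, we would combine this with a Poincaré argument over a slightly enlarged patch touching $\bd$, exactly as in \eqref{eq:eqv-bb}–\eqref{eq:eqv-cc} of Lemma \ref{lem:form-error}, using the patches $\widetilde{\mcT}_h(\cdot)$. \emph{Step 4.} Integrate over $z\in\Sigma$ using $U_{\delta_0,\epsilon_0}=\cup_{z\in\Sigma}U_{\delta_0,\epsilon_0}(z)$ and a coarea/Fubini argument for the closest-point map $p_\Sigma$; the Jacobian of $p_\Sigma$ is bounded above and below by the smoothness of $\Sigma$, so $\int_\Sigma \|\cdot\|^2_{L^2(U(z))}\,dz \sim \|\cdot\|^2_{L^2(U_{\delta_0,\epsilon_0})}$. \emph{Step 5.} Convert the resulting volume $L^2$ and $H^1$ norms on $U_{\delta_0,\epsilon_0}$ and on the enlarged patch to the energy norm: $\|\nabla v\|^2_{\mcT_h(\cdot)}\lesssim\|\nabla v\|^2_\Omega+\|v\|^2_{s_h}\le\tn v\tn_h^2$ by \eqref{eq:inverse-jumps}, and the $\bd$-term by definition of $\tn\cdot\tn_h$.

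The main obstacle I anticipate is Step 3 — controlling the innermost-fiber $L^2$ mass weighted by $(t+\epsilon_0)^{-1}$ without losing more than one logarithm. The naive bound $\|v(t,\cdot)\|_{L^2}\lesssim\|v\|_{L^\infty}(t+\epsilon_0)^{1/2}$ would be circular, and a crude trace inequality in the $t$-direction for each fiber would lose a factor $t^{-1}$, not $(t+\epsilon_0)^{-1/2}$, giving a non-integrable singularity as $t\to 0$ or an extra power of $h$. The resolution is that $v$ restricted to the slice is discretely piecewise polynomial, so one is allowed an inverse estimate: the $L^2$-mass on the fiber at height $t$ sitting inside a single layer of cut simplices is comparable (up to the stabilization $s_h$, via \eqref{eq:inverse-jumps-pair}–\eqref{eq:L2inverse_jumps}) to the $L^2$ mass on $\bn$ or on $\bd$ through that simplex, and there is only a bounded number of mesh layers since $\delta_0\sim h$. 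One must be careful that the number of simplices stacked in the $t$-direction within $U_{\delta_0}$ is uniformly bounded by shape-regularity (because $\delta_0\sim h$), so the weight $(t+\epsilon_0)^{-1}$ is essentially constant $\sim h^{-1}$ on each layer and its integral against $dt$ over $[0,\delta_0]$ is what produces precisely the single $|\ln(\delta_0/\epsilon_0)|\sim|\ln h|$ factor; combining with $\tn v\tn_h^2$ on the right and absorbing the bounded number of patch elements (shape regularity, as noted after \eqref{eq:eqv-cc}) closes the estimate.
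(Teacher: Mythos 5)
Your plan does not prove the statement as written, for two reasons. First, you have misread the scales: in this lemma the slices are $U_{\delta_0,\epsilon_0}(z)$ with $\delta_0\sim\epsilon_0\sim 1$ fixed geometric constants (the paper's proof begins by fixing such $\delta_0,\epsilon_0$), whereas your whole mechanism treats the slice as the thin anisotropic set of width $t+\epsilon$ with $\delta_0\sim h$ and $\epsilon_0\sim h^{d}$, and extracts the logarithm from $\int_0^{\delta_0}(t+\epsilon_0)^{-1}\,dt$ --- the mechanism of the cut-off lemma \eqref{eq:cutoff-bound}, not of this one. With the actual scales that integral is $O(1)$, and your closing argument (``only a bounded number of mesh layers since $\delta_0\sim h$'') is false: the slice has diameter $O(1)$ and is crossed by $O(1/h)$ element layers. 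The logarithm here has a different origin: it is the borderline two-dimensional $H^1\to L^\infty$ embedding on an $O(1)$ planar slice, and the paper obtains it by a cone/polar-coordinate argument --- writing $v(x)$ via a radial integral in a sector $\Lambda_{r_0}(x)$ with $r_0\sim 1$, splitting the integral at radius $\eta h$, and using Cauchy--Schwarz with weight $s$ so that $|\ln(r_0/\eta h)|\sim |\ln h|$ multiplies $\|\nabla v\|^2_{\Lambda_{r_0}(x)}$, while the inner piece produces $(\eta h)^2\|\nabla v\|^2_{L^\infty}$, which is admissible only because $v$ is piecewise affine and can be handled by inverse estimates. Nothing in your proposal produces this log; your log comes from a degenerating width that the set in the lemma does not have.

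Second, even granting your reading, the structure of Steps 1--3 has a gap: the $L^\infty$ norm over the two-dimensional slice is a \emph{supremum} over $t$ of fiberwise $L^\infty$ norms, and integrating your fiberwise bound over $t$ bounds $\int_0^{\delta_0}\|v\|^2_{L^\infty(\text{fiber})}\,dt$, not the supremum; controlling a sup-in-$t$ of fiber quantities such as $(t+\epsilon_0)\|\partial_\nu v(t,\cdot)\|^2_{L^2}$ by area integrals needs either an Agmon-type argument in the $t$-direction or discrete inverse estimates, neither of which you supply. Moreover, for $d\ge 3$ the inverse/trace estimates you invoke from one-dimensional fibers (or the two-dimensional plane $N_\Sigma(z)$) up to $d$-dimensional elements are exactly the step that is not robust for cut elements: the plane cuts the simplices arbitrarily, the constants carry dimensional factors and degenerate for small intersections, and this is the difficulty the paper explicitly flags. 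The paper's resolution is to keep only $\|\nabla v\|_{L^\infty(T)}$ of whole elements in the slicewise bound, integrate over $z\in\Sigma$ \emph{first}, and then combine $\|\nabla v\|^2_{L^\infty(T)}\lesssim h^{-d}\|\nabla v\|^2_{T}$ with $\int_\Sigma 1_T\lesssim h^{d-2}$ (the measure of the projection $p_\Sigma(T)$), so that the dimensional factors balance after averaging over $z$; your plan has no analogue of this step, and the fiberwise-plus-Poincar\'e patch argument borrowed from Lemma~\ref{lem:form-error} does not replace it.
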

\begin{proof} {\bf 1.} Recall that for $z \in \Sigma$, $U_{\delta, \epsilon}(z) = \{ x \in U_{\delta, \epsilon} : p_\Sigma (x) = z \}$, see (\ref{eq:udeltaeps-z}), and we have $U_{\delta,\epsilon} = \cup_{z \in \Sigma } U_{\delta,\epsilon}(z)$. There are $\delta_0 \sim \epsilon_0\sim 1$ such that 
$\delta \in (0,\delta_0]$, $\epsilon \in (0,\epsilon_0]$ and 
\begin{equation}\label{eq:udeltaeps-inclusion}
U_{\delta,\epsilon}(z) \subset U_{\delta_0,\epsilon_0}(z)
\end{equation}
We shall first show that there is a constant such that for all $z \in \Sigma$, 
\begin{equation}\label{eq:dscrete-sobolev-a}
\| v \|^2_{L^\infty(U_{\delta_0,\epsilon_0}(z))}
\lesssim
(1 + |\ln(h)|) \| v \|^2_{H^1(U_{\delta_0,\epsilon_0}(z))} 
+ h^2 \| \nabla v \|^2_{L^\infty(U_{\delta_0,\epsilon_0}(z))}
\end{equation}
To that end note that $U_{\delta_0,\epsilon_0}$ has the following cone property: for each 
$x \in U_{\delta_0,\epsilon_0}(z)$ there is a cone (or sector since $U_{\delta_0,\epsilon_0}$ 
is two dimensional) $\Lambda_{r_0}(x) \subset U_{\delta_0,\epsilon_0}(z)$, with vertex $x$, radius $r_0 \sim \delta_0 \sim 1$, and opening angle $\theta_0 \sim 1$. For 
$x \in U_{\delta_0,\epsilon_0}(z)$ 
and $r,\theta \in \Lambda_{r_0}(x)$ we have  the identity 
\begin{equation}\label{eq:discrete-sobolev-b}
v(x) = v(r,\theta) - \int_0^r \partial_r v(s,\theta) ds
\end{equation}
and the estimate
\begin{equation}\label{eq:discrete-sobolev-cc}
v^2(x) 
\lesssim v^2(r,\theta) + \left( \int_0^{r_0} \partial_r v(s,\theta) ds \right)^2
\end{equation}
We estimate the integral on the right hand side as follows
\begin{align}
\left(\int_0^{r_0} \partial_r v(s,\theta) ds\right)^2 
&\lesssim
\left( \int_0^{\eta h}  \partial_r v(s,\theta) ds \right)^2  
+ \left( \int_{\eta h}^{r_0} \partial_r v(s,\theta) ds \right)^2
\\ \label{eq:discrete-sobolev-ccc}
& \lesssim 
(\eta h)^2 \| \nabla v \|^2_{L^\infty(\Lambda_{\eta h})}  
+|\ln(d/\eta h)|  \int_{\eta h}^{r_0} (\partial_r v(s,\theta))^2  s ds
\end{align}
where for the second term on the right hand side we used the estimate 
\begin{align}
 \left( \int_{\eta h}^{r_0} \partial_r v(s,\theta) ds \right)^2
& \lesssim
  \int_{\eta h}^{r_0} s^{-1} ds  \int_{\eta h}^{r_0}  (\partial_r v(s,\theta))^2 s ds
\\
&
  \lesssim
  |\ln(d/\eta h)| \int_{\eta h}^{r_0} (\partial_r v(s,\theta))^2  s ds
\end{align}
Combining (\ref{eq:discrete-sobolev-cc}) and (\ref{eq:discrete-sobolev-ccc}), we 
get
\begin{align}
v^2(x)  \lesssim v^2(r,\theta) +
(\eta h)^2 \| \nabla v \|^2_{L^\infty(\Lambda_{\eta h})}  
+|\ln(r_0/\eta h)|  \int_{\eta h}^{r_0} (\partial_r v(s,\theta))^2  s ds 
\end{align}
and integrating over $\Lambda_{r_0}(x)$ gives 
\begin{align}
|\Lambda_{r_0}| v^2(x) 
&\lesssim 
\int_0^{r_0} \int_0^{\theta_0} v^2(r,\theta) r  d \theta  dr
+ |\Lambda_{r_0}| \, (\eta h)^2 \| \nabla v \|^2_{L^\infty(\Lambda_{\eta h}(x))}  
\\ 
&\qquad 
+ |\ln(d/\eta h)|  \int_0^{r_0} \int_0^{\theta_0} \left( \int_{\eta h}^{r_0} (\partial_r v(s,\theta))^2  s ds\right) 
r d \theta d r
\\
&\lesssim 
\| v \|^2_{\Lambda_{r_0}(x)} 
+ |\Lambda_{r_0}| \, (\eta h)^2 \| \nabla v \|^2_{L^\infty(\Lambda_{\eta h}(x))}  
+ d^2 |\ln(d/\eta h)| \, \| \nabla v \|^2_{\Lambda_{r_0}(x)}
\end{align}
Here $r_0 \sim 1$, and $|\Lambda_{r_0}| \sim r_0^2 \sim 1$ is independent of $x$, and 
thus we obtain
\begin{align}
v^2(x) \lesssim \| v \|^2_{\Lambda_{r_0}(x)} 
+ |\ln (d/\eta h)| \| \nabla v \|^2_{\Lambda_{r_0}(x)} 
+(\eta h)^2 \| \nabla v \|^2_{L^\infty(\Lambda_{\eta h}(x))}  
\end{align}
which leads to 
\begin{align}
\| v \|^2_{L^\infty(U_{\delta_0,\epsilon_0}(z))} 
\lesssim 
(1 + |\ln(h)|) \| v \|^2_{H^1(U_{\delta_0,\epsilon_0}(z))} 
+
h^2 \| \nabla v \|^2_{L^\infty(U_{\delta_0,\epsilon_0}(z))}
\end{align}
and thus (\ref{eq:dscrete-sobolev-a}) holds.

\paragraph{2. $\boldsymbol{d=2}$.}
In the two dimensional case $d=2$, the interface $\Sigma$ consist of a set of 
isolated points and we may cover the two dimensional set $U_{\delta_0,\epsilon_0}(z)$ 
by a patch of elements $\mcT_h(U_{\delta_0,\epsilon_0})$, and then apply the element 
wise inverse inequality (\ref{Linfty}),
\begin{align}
\| v \|^2_{L^\infty(U_{\delta_0,\epsilon_0}(z))} 
&\lesssim 
(1 + |\ln(h)|) \| v \|^2_{H^1(U_{\delta_0,\epsilon_0}(z))} 
+
h^2 \| \nabla v \|^2_{L^\infty(U_{\delta_0,\epsilon_0}(z))}
\\
&\lesssim 
(1 + |\ln(h)|) \| v \|^2_{H^1(\mcT_h(U_{\delta_0,\epsilon_0}(z)))} 
+
h^2 \| \nabla v \|^2_{L^\infty(\mcT_h(U_{\delta_0,\epsilon_0}(z)))}
\\
&\lesssim 
(1 + |\ln(h)|) \| v \|^2_{H^1(\mcT_h(U_{\delta_0,\epsilon_0}(z)))} 
+
\| \nabla v \|^2_{\mcT_h(U_{\delta_0,\epsilon_0}(z)))}
\\
&\lesssim (1 + |\ln(h)|)  \tn v \tn_h^2
\end{align}
where we finally used the stabilization estimate (\ref{eq:inverse-jumps}). 
This completes the proof in the case $d=2$.

\paragraph{3. $\boldsymbol{d\geq 3}$.} Here, the set 
$U_{\delta_0,\epsilon_0}(z)$, for a given $z \in \Sigma$, is a subset of a two 
dimensional plane, that cuts through the $d$ dimensional elements in a general 
way, which requires a more refined argument since an element wise trace inequality 
can not be applied due to the presence of cut elements. We start by integrating (\ref{eq:dscrete-sobolev-a})  over $\Sigma$,
\begin{align}
\int_\Sigma \| v \|^2_{L^\infty(U_{\delta_0,\epsilon_0}(z))} 
&\lesssim 
(1 + |\ln(h)|) \int_\Sigma \| v \|^2_{H^1(U_{\delta_0,\epsilon_0}(z))} 
+
h^2 \int_\Sigma \| \nabla v \|^2_{L^\infty(U_{\delta_0,\epsilon_0}(z))}
\\
&
\lesssim 
(1 + |\ln(h)|) \| v \|^2_{H^1(\mcT_h(U_{\delta_0,\epsilon_0}))} 
+
\| \nabla v \|^2_{\mcT_h(U_{\delta_0,\epsilon_0})}
\\
&
\lesssim 
(1 + |\ln(h)|) \| v \|^2_{H^1(\mcT_h(U_{\delta_0,\epsilon_0}))} 
\\
&\lesssim 
(1 + |\ln(h)|) \tn v \tn^2_h
\end{align}
Here we used the inverse estimate 
\begin{align}\label{eq:discrete-sobolev-b2}
h^2 \int_\Sigma \| \nabla v \|^2_{L^\infty(U_{\delta_0,\epsilon_0}(z))}
\lesssim 
\| \nabla v \|^2_{\mcT_h(U_{\delta_0,\epsilon_0})}
\end{align}
To verify (\ref{eq:discrete-sobolev-b2}) we first note that, 
with $w = \nabla v$, we have for each $z\in \Sigma$,
\begin{align}
\| w \|^2_{L^{\infty}(U_{\delta_0,\epsilon_0}(z))} 
&=\max_{T \in \mcT_h(U_{\delta_0,\epsilon_0}(z))} 
\| w \|^2_{L^{\infty}(U_{\delta_0,\epsilon_0}(z)\cap T)} 
\\
&\lesssim 
\sum_{T \in \mcT_h(U_{\delta_0,\epsilon_0}(z))} 
\| w \|^2_{L^{\infty}(U_{\delta_0,\epsilon_0}(z)\cap T)} 
\\  \label{eq:discrete-sobolev-d}
&\lesssim   
\sum_{T \in \mcT_h(U_{\delta_0,\epsilon_0})} 
\| w \|^2_{L^{\infty}(T)} 1_T(z) 
\\  \label{eq:discrete-sobolev-e}
&\lesssim   
\sum_{T \in \mcT_h(U_{\delta_0,\epsilon_0})} 
h^{-d} \| w \|^2_{T} 1_T(z) 
\end{align}
where $1_T(z) = 1$ if $U_{\delta_0,\epsilon_0}(z)\cap T \neq \emptyset$ and $0$ 
otherwise, and we employed an inverse inequality in the last step. We next note 
that $1_T:\Sigma \rightarrow \{0,1\}$ is the characteristic function of the closest 
point projection $p_\Sigma(T)$ of $T$ on $\Sigma$, and therefore 
\begin{equation} \label{eq:discrete-sobolev-f}
\int_\Sigma 1_T \lesssim h^{d-2}
\end{equation}
Integrating, (\ref{eq:discrete-sobolev-e}) over $\Sigma$ we get 
\begin{align}
\int_{\Sigma} \| w \|^2_{L^{\infty}(U_{\delta_0,\epsilon_0}(z))} 
&\lesssim 
\int_{\Sigma} \sum_{T \in \mcT_h(U_{\delta_0,\epsilon_0})} 
h^{-d} \| w \|^2_T 1_T(z) 
\\
& \lesssim 
\sum_{T\in \mcT_h(U_{\delta_0,\epsilon_0})} h^{-d} \| w \|^2_T  \int_\Sigma 1_T(z) 
\\
&= h^{-2} \| w \|^2_{\mcT_h(U_{\delta_0,\epsilon_0})}
\end{align}
where we used  (\ref{eq:discrete-sobolev-f}). This completes the verification of (\ref{eq:discrete-sobolev-b2}).

%
%
\end{proof}

\begin{lem}  \label{lem:weight_stab} Let $\chi$ be defined by (\ref{eq:cutoff}), then there is a constant 
such that for all $v\in V_h$,
\begin{empheq}[box = \widefbox]{equation}\label{eq:cutoff-v-H1}
\| (\nabla \chi) v \|_{U_{\delta,\epsilon}} 
\lesssim 
( 1 + |\ln(h)| )  \tn v \tn_{h}
\end{empheq}
%
\end{lem}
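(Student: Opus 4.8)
The plan is to split the region $U_{\delta,\epsilon}$ into the ``bulk'' part $U_\delta(\partial\Omega_D)\cup(U_\delta\setminus U_{\delta,\epsilon})$-like piece where $|\nabla\chi|\lesssim\delta^{-1}\sim h^{-1}$, and the ``transition wedge'' $U_{\delta,\epsilon}$ near $\Sigma$ where the gradient decomposes into a tangential part $\nabla_\Sigma\chi$ (bounded), a normal part $\nabla_n\chi$ (bounded by $\delta^{-1}$), and the dangerous conormal part $\nabla_\nu\chi$ (bounded by $\gamma(t)^{-1}$). Since the statement is only about $U_{\delta,\epsilon}$, the first two pieces give
\[
\|(\nabla_\Sigma\chi)v\|^2_{U_{\delta,\epsilon}}+\|(\nabla_n\chi)v\|^2_{U_{\delta,\epsilon}}
\lesssim (1+h^{-2})\|v\|^2_{U_{\delta,\epsilon}},
\]
and because $U_{\delta,\epsilon}$ is contained in $U_\epsilon(\Sigma)$-type tubes of width $\sim\epsilon\sim h^d$, the same $L^\infty$-extraction trick used in \eqref{eq:eqv-bb}--\eqref{eq:eqv-cc} of Lemma~\ref{lem:form-error} shows $h^{-2}\|v\|^2_{U_{\delta,\epsilon}}\lesssim \epsilon h^{-2-d}\|v\|^2_{L^\infty(\cdots)}\cdot(\text{measures})\lesssim \tn v\tn_h^2$; I expect this part to be routine and lossless (no log).

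The heart of the proof is the conormal term $\|(\nabla_\nu\chi)v\|^2_{U_{\delta,\epsilon}}$. First I would slice by $z\in\Sigma$ using $U_{\delta,\epsilon}=\cup_{z\in\Sigma}U_{\delta,\epsilon}(z)$ and the coarea-type identity already implicit in the paper, writing
\[
\|(\nabla_\nu\chi)v\|^2_{U_{\delta,\epsilon}}\lesssim\int_\Sigma\|(\nabla_\nu\chi)v\|^2_{U_{\delta,\epsilon}(z)}\,
\lesssim\int_\Sigma \|v\|^2_{L^\infty(U_{\delta_0,\epsilon_0}(z))}\,\|\nabla_\nu\chi\|^2_{U_{\delta,\epsilon}(z)}.
\]
Now apply the cut-off Lemma \eqref{eq:cutoff-bound}, which gives $\|\nabla_\nu\chi\|^2_{U_{\delta,\epsilon}(z)}\lesssim |\ln(1+\delta/\epsilon)|\lesssim 1+|\ln h|$ uniformly in $z$ (using $\delta\sim h$, $\epsilon\sim h^\alpha$), so this factors out of the integral. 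What remains is $\int_\Sigma\|v\|^2_{L^\infty(U_{\delta_0,\epsilon_0}(z))}$, which is precisely the content of the discrete Sobolev inequality Lemma~\ref{lem:sobolev_sigma}, \eqref{eq:dscrete-sobolev}: it is bounded by $(1+|\ln h|)\tn v\tn_h^2$. Combining, the conormal contribution is $\lesssim (1+|\ln h|)^2\tn v\tn_h^2$, giving $\|(\nabla_\nu\chi)v\|_{U_{\delta,\epsilon}}\lesssim(1+|\ln h|)\tn v\tn_h$ after taking square roots, which matches the claimed bound.

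The main obstacle is making the pairing of the pointwise conormal bound $\|\nabla_\nu\chi\|^2_{U_{\delta,\epsilon}(z)}\lesssim|\ln(1+\delta/\epsilon)|$ with the $L^\infty$-in-the-wedge control of $v$ rigorous: one must be careful that the measure on $\Sigma$ used to reassemble $\|(\nabla_\nu\chi)v\|^2_{U_{\delta,\epsilon}}=\int_\Sigma\|(\nabla_\nu\chi)v\|^2_{U_{\delta,\epsilon}(z)}\,dz$ (via the product structure $U_{\delta,\epsilon}\cong \Sigma\times(\text{2D wedge})$ modulo a bounded Jacobian from the tubular-neighborhood coordinates) is compatible with both the $t$-integration used to prove \eqref{eq:cutoff-bound} and the cone argument in Lemma~\ref{lem:sobolev_sigma}. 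Once that change of variables is pinned down, the rest is assembling the two logarithmic factors — one from $\chi$, one from the discrete Sobolev embedding — and noting their product is still only $(1+|\ln h|)$ after the square root, so no extra powers of the logarithm leak in. I would also double-check that $U_{\delta,\epsilon}\subset U_{\delta_0,\epsilon_0}$ so that Lemma~\ref{lem:sobolev_sigma} applies verbatim, which is guaranteed by \eqref{eq:udeltaeps-inclusion} and the choice $\delta\le\delta_0$, $\epsilon\le\epsilon_0$.
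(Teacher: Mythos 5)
Your treatment of the dominant conormal term is exactly the paper's argument: slice $U_{\delta,\epsilon}=\cup_{z\in\Sigma}U_{\delta,\epsilon}(z)$, apply H\"older on each fiber, pull out $\sup_{z\in\Sigma}\|\nabla_\nu\chi\|^2_{U_{\delta,\epsilon}(z)}\lesssim 1+|\ln h|$ from \eqref{eq:cutoff-bound}, and bound $\int_\Sigma\|v\|^2_{L^\infty(U_{\delta_0,\epsilon_0}(z))}$ by Lemma \ref{lem:sobolev_sigma}; the product $(1+|\ln h|)^2$ under the square root gives the claimed bound. That part is correct and needs no change.

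The gap is in your ``routine and lossless'' handling of the $\nabla_n\chi$ and $\nabla_\Sigma\chi$ contributions. Your geometric premise is false: $U_{\delta,\epsilon}$ is \emph{not} contained in a tube of conormal width $\sim\epsilon\sim h^d$. By \eqref{eq:Udeltaepsilon} with $\gamma(t)=t+\epsilon$, the wedge widens to conormal width $\delta+\epsilon\sim h$ at depth $t=\delta$, so each fiber $U_{\delta,\epsilon}(z)$ has two-dimensional measure $\sim\delta^2\sim h^2$ and the total volume is $\sim h^2|\Sigma|$, not $\sim\epsilon$. Consequently the global $L^\infty$-extraction trick of \eqref{eq:eqv-bb}--\eqref{eq:eqv-cc} does not reproduce your bound: extracting the volume factor $h^2$ and then using $\|v\|^2_{L^\infty(\mcT_h(U_{\delta,\epsilon}))}\lesssim h^{-d}\|v\|^2_{T^*}$ on the worst element plus the Poincar\'e bound only yields $h^{-2}\|v\|^2_{U_{\delta,\epsilon}}\lesssim h^{2-d}\tn v\tn_h^2$, which degenerates for $d\ge 3$ (the trick works in Lemma \ref{lem:form-error} because there the set $U_\epsilon(\Sigma)\subset\bn$ really does have measure $\sim\epsilon$, and the prefactor $\epsilon h^{1-d}$ is tolerated in that estimate). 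The repair is the fiberwise version of the very argument you already use for the conormal term, and is what the paper does: since $|U_{\delta,\epsilon}(z)|\lesssim\delta^2\lesssim h^2$,
\begin{equation*}
\delta^{-2}\|v\|^2_{U_{\delta,\epsilon}}
\lesssim \delta^{-2}\int_\Sigma |U_{\delta,\epsilon}(z)|\,\|v\|^2_{L^\infty(U_{\delta_0,\epsilon_0}(z))}
\lesssim \int_\Sigma \|v\|^2_{L^\infty(U_{\delta_0,\epsilon_0}(z))}
\lesssim (1+|\ln h|)\,\tn v\tn^2_h
\end{equation*}
by Lemma \ref{lem:sobolev_sigma}. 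So this term is not log-free, contrary to your claim, but the extra logarithm is harmless because the conormal term already forces $(1+|\ln h|)^2$ before taking the square root; with this correction your proof closes and coincides with the paper's.
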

\begin{proof} Splitting $\| (\nabla \chi) v \|^2_{U_{\delta,\epsilon}}$ into three 
contributions corresponding to the directions of the derivative relative to the 
interface $\Sigma$ we obtain
\begin{align}
\| (\nabla \chi) v \|^2_{U_{\delta,\epsilon}} 
&\lesssim
\| (\nabla_\Sigma \chi) v \|^2_{U_{\delta,\epsilon}} 
+ \| (\nabla_n \chi) v \|^2_{U_{\delta,\epsilon}} 
+ \| (\nabla_\nu \chi) v \|^2_{U_{\delta,\epsilon}} 
\\ \label{eq:second_line}
&\lesssim \| v \|^2_{U_{\delta,\epsilon}} 
+ \delta^{-2} \| v \|^2_{U_{\delta,\epsilon}} 
+ \| (\nabla_\nu \chi) v \|^2_{U_{\delta,\epsilon}}
\\ 
&\lesssim \| v \|^2_{U_{\delta,\epsilon}} 
+
\int_\Sigma \| v \|^2_{L^\infty(U_{\delta,\epsilon}(z))}  
+ 
(1 + |\ln(h)|)^2  \tn v \tn^2_{h}
\\
&\lesssim (1 + |\ln(h)|^2) \, \tn v \tn^2_{h}
\end{align}
where we for the second term \eqref{eq:second_line} used the facts 
$|U_{\delta,\epsilon}(z))| \lesssim \delta^2 \lesssim h^2$,  
$\| v \|_{L^\infty(U_{\delta,\epsilon}(z))} \leq  \| v \|_{L^\infty(U_{\delta_0,\epsilon_0}(z))}$ 
followed by (\ref{eq:dscrete-sobolev}), and for the third term we used the 
estimate 
\begin{equation}\label{eq:tech-cutoff-lem-a}
\| (\nabla_\nu \chi) v \|_{U_{\delta,\epsilon}} \lesssim (1 +  |\ln(h)| ) \tn v \tn_{h}
\end{equation}
which we verify next. This argument completes the proof of (\ref{eq:cutoff-v-H1}).  

To verify (\ref{eq:tech-cutoff-lem-a}) we use H\"older's inequality twice, first on 
$U_{\delta,\epsilon}(z)$ and then on $\Sigma$, employ (\ref{eq:cutoff-bound-b}), 
and finally (\ref{eq:dscrete-sobolev}),
\begin{align}
\|(\nabla_\nu \chi) v \|^2_{U_{\delta,\epsilon}} 
&=\int_{\Sigma} \|(\nabla_\nu \chi) v \|^2_{U_{\delta,\epsilon}(z)} 
\\ 
&\lesssim 
\int_{\Sigma} \| \nabla_\nu \chi \|^2_{U_{\delta,\epsilon}(z)} 
\| v \|^2_{L^{\infty}(U_{\delta,\epsilon}(z))} 
\\ 
&\lesssim 
\Big( \sup_{z\in \Sigma}  \| \nabla_\nu \chi \|^2_{U_{\delta,\epsilon}(z)} \Big) 
\int_{\Sigma} \| v \|^2_{L^{\infty}(U_{\delta,\epsilon}(z))} 
\\ 
&\lesssim 
(1 + |\ln(h)|) \int_{\Sigma} \| v \|^2_{L^{\infty}(U_{\delta,\epsilon}(z))} 
\\ 
&\lesssim 
(1 + |\ln(h)|) \int_{\Sigma} \| v \|^2_{L^{\infty}(U_{\delta_0,\epsilon_0}(z))} 
\\ 
&\lesssim
(1 + |\ln(h)|)^2 \tn v \tn^2_{h}
\end{align}
Thus (\ref{eq:tech-cutoff-lem-a}) holds.
\end{proof}
\begin{lem} There is a constant such that for all $w \in V_h$,
\begin{equation}\label{eq:technical-aa}
h^{-2} \|w\|^2_{\mcT_h(U_\delta)} 
+ 
\|\nabla w\|^2_{\mcT_h(U_\delta)} 
\lesssim 
\tn w \tn^2_{h}
\end{equation}
which holds for $\delta = \eta h$ with $\eta$ a sufficiently small constant. 
\end{lem}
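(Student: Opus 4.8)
The plan is to split the left-hand side of \eqref{eq:technical-aa} into its two terms, dispose of the gradient term in one line, and concentrate on the $L^2$ term, which is morally the estimate already performed around \eqref{eq:eqv-bb} but now over the whole set $U_\delta$ rather than just a tube around $\Sigma$. For the gradient term, since $\delta=\eta h$ the set $\mcT_h(U_\delta)$ is a subset of the active mesh $\mcT_h$, so by the stabilization estimate \eqref{eq:inverse-jumps} one has $\|\nabla w\|^2_{\mcT_h(U_\delta)}\le\|\nabla w\|^2_{\mcT_h}\lesssim\|\nabla w\|^2_\Omega+\|w\|^2_{s_h}\le\tn w\tn^2_h$, and this term is finished.

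For the $L^2$ term $h^{-2}\|w\|^2_{\mcT_h(U_\delta)}$ the first step is a geometric observation: because $\delta=\eta h$ and, by \eqref{eq:eps-delta}, $\epsilon\sim h^\alpha\lesssim\delta$, every point of $U_\delta=U_\delta(\bd)\cup U_{\delta,\epsilon}$ lies within distance $\lesssim\eta h$ of $\overline{\partial\Omega}_D$. For $U_\delta(\bd)$ this is the definition; for $U_{\delta,\epsilon}$ it follows from the triangle inequality, since a point of $U_{t,\gamma(t)}(\Sigma_t)$ is within $\gamma(t)=t+\epsilon$ of $\Sigma_t$, which in turn lies within $t\le\delta$ of $\Sigma\subset\overline{\partial\Omega}_D$. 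Consequently every element $T\in\mcT_h(U_\delta)$ lies within $O(h)$ of $\overline{\partial\Omega}_D$.

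The second step converts this proximity into $\tn\cdot\tn_h$-control, using the device from the appendix of \cite{BHL18} already invoked at \eqref{eq:eqv-bb}. I would cover $\mcT_h(U_\delta)$ by a family of patches $\{\omega_j\}$ of face-neighbouring elements, each of diameter $\lesssim h$, of uniformly bounded cardinality (by shape regularity, smoothness of $\partial\Omega$, and smallness of $\eta$) and of bounded overlap, such that each $\omega_j$ carries a piece of $\bd$ of $(d-1)$-measure $\gtrsim h^{d-1}$; such a patch is obtained by adjoining to an element meeting $U_\delta$ an $O(1)$ number of face neighbours from $\mcT_h(\partial\Omega_D)$, walking away from $\Sigma$ when the nearest boundary point happens to lie on $\Sigma$. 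On each $\omega_j$ the Poincar\'e--Friedrichs inequality (same reference) gives $\|w\|^2_{\omega_j}\lesssim h^2\|\nabla w\|^2_{\omega_j}+h\|w\|^2_{\bd\cap\omega_j}$ for the continuous piecewise affine function $w$. Multiplying by $h^{-2}$, summing over $j$ and using bounded overlap together with $\cup_j\omega_j\subset\mcT_h$, and then applying \eqref{eq:inverse-jumps} once more, gives $h^{-2}\|w\|^2_{\mcT_h(U_\delta)}\lesssim\|\nabla w\|^2_{\mcT_h}+h^{-1}\|w\|^2_{\bd}\lesssim\tn w\tn^2_h$. Adding the gradient bound yields \eqref{eq:technical-aa}.

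I expect the \emph{patch construction} of the second step to be the only delicate point: one needs $\eta$ small enough that the elements meeting $U_\delta$ stay within $O(h)$ of $\bd$, so that each can be linked to a ``good'' Dirichlet element through a uniformly bounded chain of elements and the resulting covering has bounded overlap. This is precisely the geometric content borrowed from the appendix of \cite{BHL18}; everything else is bookkeeping with the stabilization estimate \eqref{eq:inverse-jumps} and the definition \eqref{eq:Udelta} of $U_\delta$.
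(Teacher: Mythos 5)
Your argument is correct, and it uses the same three ingredients as the paper (the observation that all of $U_\delta$ lies within $O(h)$ of $\overline{\partial\Omega}_D$, the Poincar\'e device from the appendix of \cite{BHL18}, and the stabilization estimate \eqref{eq:inverse-jumps}), but it assembles them differently. The paper does not build a patch covering at all: it first splits $\|w\|^2_{\mcT_h(U_\delta)}$ into the contribution over $\mcT_h(U_\delta(\bd))$ and the contribution near $\Sigma$, and disposes of the latter wholesale by the stabilization transfer estimate \eqref{eq:L2inverse_jumps} (with $\omega_0=U_\delta(\bd)$, $\omega_1=U_\delta$), which moves that $L^2$ mass onto $\mcT_h(U_\delta(\bd))$ at the price of jump terms controlled by $\|w\|_{s_h}$; it then applies a single Poincar\'e inequality over the whole Dirichlet tube, $\|w\|^2_{\mcT_h(U_\delta(\bd))}\lesssim \delta\|w\|^2_{\bd}+\delta^2\|\nabla w\|^2_{\mcT_h(U_\delta(\bd))}$, and concludes with \eqref{eq:inverse-jumps} and $\delta\sim h$. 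Your route replaces the \eqref{eq:L2inverse_jumps} step by a covering of $\mcT_h(U_\delta)$ with element patches that each reach a piece of $\bd$ of measure $\gtrsim h^{d-1}$, with a patchwise Poincar\'e--Friedrichs inequality and a bounded-overlap summation; this is exactly the device the paper uses elsewhere (the enlarged patch $\widetilde{\mcT}_h(U_\epsilon(\Sigma))$ in \eqref{eq:eqv-bb}), so it is legitimate, but it carries the extra geometric burden you correctly identify: the chain construction, the boundary-measure condition per patch, and bounded overlap, all for $\eta$ small. What the paper's version buys is that no per-patch measure condition is needed, since the whole tube $U_\delta(\bd)$ projects onto $\bd$ and the awkward elements near $\Sigma$ (which may sit over the Neumann side) are handled by the stabilization rather than by chaining them to Dirichlet elements; what your version buys is that it avoids invoking \eqref{eq:L2inverse_jumps} and makes the dependence on the local Dirichlet trace explicit. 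Both end by absorbing the full-element gradient norms via \eqref{eq:inverse-jumps}, so the final bound is the same.
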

\begin{proof} 
First observe that by construction no point in $U_\delta$ is further 
than $O(\delta)$ from $\partial \Omega_D$. Using estimate 
\eqref{eq:L2inverse_jumps} followed by the Poincar\'e inequality 
\begin{align}
\|w\|^2_{\mcT_h(U_\delta(\partial \Omega_D))} 
&\lesssim \delta \|w\|_{\partial \Omega_D} +
\delta^2 \|\nabla w\|_{\mcT_h(U_\delta(\partial \Omega_D))}
\end{align}
see appendix \cite{BHL18}, we obtain
\begin{align}
\|w\|^2_{\mcT_h(U_\delta)} 
&\lesssim \|w\|^2_{\mcT_h(U_\delta(\partial \Omega_D))} 
+ \|w\|^2_{\mcT_h(U_{\delta,\epsilon})} 
\\
&
\lesssim \|w\|^2_{\mcT_h(U_\delta(\partial \Omega_D))} + 
h^3 \|[\nabla_n w]\|^2_{\mcF_h(\partial \Omega \cap U_\delta)}
\\
&
\lesssim  \delta \|w\|_{\partial \Omega_D} +
\delta^2 \|\nabla w\|_{\mcT_h(U_\delta(\partial \Omega_D))}
  + h^2\|\nabla w\|^2_{\mcT_h(\partial \Omega \cap U_\delta)} 
\end{align}
where we used the estimate
\begin{equation}
h \|[\nabla_n w]\|^2_{\mcF_h(\partial \Omega \cap U_\delta)} \lesssim \| \nabla v \|^2_{\mcT_h} 
\end{equation}
Applying now \eqref{eq:inverse-jumps} and using $\delta \sim h$ we conclude that
\begin{equation}
h^{-2} \|w\|^2_{\mcT_h(U_\delta)} 
+   \|\nabla w\|^2_{\mcT_h(U_\delta)} 
\lesssim h^{-1} \|w\|^2_{\partial \Omega_D}
+\|\nabla w\|^2_{\Omega}+ \| w \|^2_{s_h} 
\lesssim \tn w \tn^2_{h}
\end{equation}
\end{proof}

\begin{lem}\label{lem:trace-cont} There 
is a constant such that for all $v \in V,$ $v_h \in V_h,$ and $w \in V_h$,
\begin{empheq}[box = \widefbox]{align}\nonumber
(\nabla_n (v-v_h),w)_{\chi,\partial \Omega } &\lesssim
  \Big( 
(1 + |\ln(h)| ) \| \nabla (v-v_h) \|_{U_\delta} 
\\ \label{eq:trace-cont-b}
&\qquad 
+ h \|\Delta v \|_{U_\delta}  
 + h^{1/2} \|[\nabla_n v_h]\|_{\mcF_h\cap U_\delta}\Big)  \tn w \tn_{h}
\end{empheq}
\end{lem}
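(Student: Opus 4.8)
### Proof proposal

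The plan is to rewrite the weighted duality pairing $(\nabla_n(v-v_h),w)_{\chi,\partial\Omega}$ as a volume integral over the support region $U_\delta$ of $\chi$ using Green's formula, and then to estimate the resulting bulk terms with the technical lemmas already established. First I would note that since $\chi w$ is supported in $U_\delta = U_\delta(\partial\Omega_D)\cup U_{\delta,\epsilon}$ and $\Delta v\in L^2(U_{\delta_0})$ for $v\in V$, the identity \eqref{eq:flux_def} applied with test function $\chi w$ (extended suitably into $\Omega$, or rather localized to $\mcT_h(U_\delta)$) gives
\begin{equation}\nonumber
(\nabla_n v, w)_{\chi,\partial\Omega} = (\Delta v, \chi w)_{U_\delta} + (\nabla v, \nabla(\chi w))_{U_\delta},
\end{equation}
and similarly for $v_h\in V_h$ — except that $v_h$ is only piecewise smooth, so Green's formula produces in addition the interelement jump contribution $h^{1/2}\|[\nabla_n v_h]\|_{\mcF_h\cap U_\delta}$ times a weighted $L^2$ norm of $\chi w$. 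Subtracting, the $(\nabla_n v_h,w)_\chi$ piece contributes $(\Delta v_h,\chi w)$ elementwise (which vanishes since $v_h$ is affine, $\Delta v_h=0$ on each element) plus the jump term, so the main surviving terms are $(\Delta v,\chi w)_{U_\delta}$, $(\nabla(v-v_h),\nabla(\chi w))_{U_\delta}$, and the jump term.

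Next I would expand $\nabla(\chi w) = \chi\nabla w + (\nabla\chi)w$ and estimate term by term with Cauchy--Schwarz, using $0\le\chi\le1$ and $\supp\chi\subset U_\delta$:
\begin{align}\nonumber
(\Delta v,\chi w)_{U_\delta} &\lesssim \|\Delta v\|_{U_\delta}\,\|w\|_{\mcT_h(U_\delta)} \lesssim h\|\Delta v\|_{U_\delta}\,\tn w\tn_h,
\\ \nonumber
(\nabla(v-v_h),\chi\nabla w)_{U_\delta} &\lesssim \|\nabla(v-v_h)\|_{U_\delta}\,\|\nabla w\|_{\mcT_h(U_\delta)} \lesssim \|\nabla(v-v_h)\|_{U_\delta}\,\tn w\tn_h,
\\ \nonumber
(\nabla(v-v_h),(\nabla\chi)w)_{U_\delta} &\lesssim \|\nabla(v-v_h)\|_{U_\delta}\,\|(\nabla\chi)w\|_{U_\delta} \lesssim (1+|\ln h|)\,\|\nabla(v-v_h)\|_{U_\delta}\,\tn w\tn_h,
\end{align}
where the first two lines invoke \eqref{eq:technical-aa} (the lemma bounding $h^{-2}\|w\|^2_{\mcT_h(U_\delta)}+\|\nabla w\|^2_{\mcT_h(U_\delta)}$ by $\tn w\tn_h^2$), and the crucial third line invokes Lemma~\ref{lem:weight_stab}, inequality \eqref{eq:cutoff-v-H1}, which supplies exactly the single logarithmic factor. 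The jump term is handled by the same inverse/stabilization reasoning as in Lemma~\ref{lem:form-error}: $h^{1/2}\|[\nabla_n v_h]\|_{\mcF_h\cap U_\delta}$ multiplies a weighted norm of $\chi w$ which is again controlled by $\tn w\tn_h$ via \eqref{eq:technical-aa}. Collecting the four contributions yields \eqref{eq:trace-cont-b}.

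The main obstacle is making the integration-by-parts step rigorous and clean despite three complications simultaneously: (i) $v$ has low regularity, only $\Delta v\in L^2(U_{\delta_0})$, so \eqref{eq:flux_def} must be applied with the specific test function $\chi w$ (which is in $H^1(\Omega)$ since $w\in V_h\subset H^1$ and $\chi$ is smooth) and one must check $\chi w$ can play the role of the generic $H^1(\Omega)$ test function there; (ii) $v_h$ is only elementwise affine, so Green's formula must be applied on the active mesh $\mcT_h(U_\delta)$ element by element, carefully collecting the interface terms into $[\nabla_n v_h]$ — here one must also make sure the boundary pieces of $\partial(\mcT_h(U_\delta))$ not on $\partial\Omega$ either cancel or lie where $\chi w$ (or $\chi$) vanishes; and (iii) the weight $\chi$ is genuinely singular near $\Sigma$ in the conormal direction, so the factor $\|(\nabla\chi)w\|_{U_\delta}$ cannot be bounded by a trivial $\delta^{-1}\|w\|$ estimate — it is precisely here that the refined anisotropic bounds on $\nabla\chi$ in \eqref{eq:cutoff} and the discrete Sobolev-type inequality of Lemma~\ref{lem:sobolev_sigma} are indispensable, and where the logarithmic loss originates. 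Once Lemma~\ref{lem:weight_stab} is quoted the remaining bookkeeping is routine Cauchy--Schwarz and a uniform bound on the number of elements in $\mcT_h(U_\delta)$ from shape regularity and $\delta\sim h$.
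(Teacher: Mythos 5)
Your proposal follows essentially the same route as the paper's proof: Green's formula for $v$ localized to the support of $\chi$ (legitimate since $\Delta v\in L^2(U_{\delta_0})$ for $v\in V$), elementwise Green's formula for $v_h$ producing the jump term (with $\Delta v_h=0$ elementwise), and then Cauchy--Schwarz combined with \eqref{eq:technical-aa} for the bulk, flux and jump contributions and with Lemma~\ref{lem:weight_stab} for the $(\nabla\chi)w$ term, which is exactly where the logarithmic factor enters. The only cosmetic difference is that the paper splits $\|(\nabla\chi)w\|$ over $U_\delta(\partial\Omega_D)$, where the crude $\delta^{-1}\sim h^{-1}$ bound plus \eqref{eq:technical-aa} suffices, and $U_{\delta,\epsilon}$, where \eqref{eq:cutoff-v-H1} is invoked, whereas you quote \eqref{eq:cutoff-v-H1} for all of $U_\delta$; this is immaterial since the Dirichlet part is handled by precisely the trivial estimate you yourself mention.
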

\begin{proof} For $v \in V$, see (\ref{eq:defV}), we have 
$\Delta v \in L^2( \text{supp}(\chi)) \subset L^2(U_{\delta_0})$ and  using Green's formula 
\begin{align}
(\Delta v, \chi w )_{\Omega} 
= (\nabla_n v, \chi w )_{\partial \Omega} - ( \nabla v, (\nabla \chi)  w)_\Omega 
- ( \nabla v, \chi  \nabla w)_\Omega 
\end{align}
For $v_h \in V_h$ we use Green's formula element wise
\begin{align}
(\nabla v_h, \chi \nabla w)_{\Omega} 
&=(\nabla_n v_h, \chi w)_{\partial \Omega} 
+ ([\nabla_n v_h],\chi w)_{\mcF_h\cap \Omega} 
\\ 
&\qquad 
-(\Delta v_h, \chi w)_{\mcT_h \cap \Omega} 
- (\nabla v_h, (\nabla \chi) w)_\Omega  
 \end{align}
Combining the formulas and rearranging the terms we obtain
\begin{align}
(\nabla_n (v-v_h), w)_{\chi,\partial \Omega}
&=
(\nabla  (v-v_h), \chi \nabla w)_{\Omega}  
+ 
{(\nabla  (v-v_h), (\nabla \chi) w)_\Omega}
\\ 
&\qquad + 
(\Delta v, \chi w)_\Omega 
+
([\nabla_n v_h],\chi w)_{\mcF_h\cap \Omega} 
\end{align}
To estimate the right hand side we may directly estimate the first 
two terms using the Cauchy Schwarz inequality and (\ref{eq:technical-aa}),
\begin{gather}
(\nabla (v-v_h), \chi \nabla w)_{\Omega}  
\lesssim \| \nabla (v-v_h) \|_{U_\delta} \| \nabla w \|_{U_\delta}
\lesssim \| \nabla (v-v_h) \|_{U_\delta} \tn w \tn_{h}
\\[3mm] 
(\Delta v, \chi w)_\Omega 
\lesssim h \| \Delta v \|_{U_\delta} h^{-1} \|w\|_{U_\delta} 
\lesssim h \| \Delta v\|_{U_\delta} \tn w \tn_{h}
\end{gather}
Next using the Cauchy Schwarz inequality, the element wise trace 
inequality (\ref{trace_H1}), 
\begin{align}
([\nabla_n v_h],\chi w)_{\mcF_h\cap \Omega} 
&\lesssim
h^{1/2} \|[\nabla_n v_h]\|_{\mcF_h\cap U_\delta} 
h^{-1/2}( h^{-1} \|w\|^2_{\mcT_h(U_\delta)} + h \|\nabla w\|^2_{\mcT_h(U_\delta)} )^{1/2}
\\
&\lesssim
h^{1/2} \|[\nabla_n v_h]\|_{\mcF_h\cap U_\delta} 
( h^{-2} \|w\|^2_{\mcT_h(U_\delta)} + \|\nabla w\|^2_{\mcT_h(U_\delta)} )^{1/2}
\\
&\lesssim
h^{1/2} \|[\nabla_n v_h]\|_{\mcF_h\cap U_\delta} \tn w \tn_{h}
\end{align}
where for the last inequality we employed (\ref{eq:technical-aa}). For the remaining term 
we use the Cauchy Schwarz inequality, followed by  (\ref{eq:technical-aa}) and (\ref{eq:cutoff-v-H1}),
 \begin{align}
(\nabla (v-v_h), (\nabla \chi) w)_\Omega 
&\lesssim 
\| \nabla (v-v_h) \|_{U_\delta} \Big( \| (\nabla \chi) w \|_{U_\delta(\partial \Omega_D)} 
+ \| (\nabla \chi) w \|_{U_{\delta,\epsilon}} \Big)
\\
&\lesssim 
\| \nabla (v-v_h) \|_{U_\delta} 
\Big( \delta^{-1} \| w \|_{U_\delta(\partial \Omega_D)} 
+   \| (\nabla \chi) w \|_{U_{\delta,\epsilon}}  \Big)
\\
&\lesssim 
(1 + |\ln(h)| ) \| \nabla (v-v_h) \|_{U_\delta}  \tn w \tn_{h}
\end{align}
Collecting the bounds we arrive at 
\begin{align}
(\nabla_n (v-v_h), w)_{\partial \Omega}
 &\lesssim  \Big(  ( 1 +|\ln(h)|) 
\| \nabla (v-v_h) \|_{U_\delta} 
\\
&\qquad
+ h \| \Delta v \|_{U_\delta}
+ h^{{1/2}} \|[\nabla_n v_h ] \|_{\mcF_h\cap U_\delta} \Big) \tn w \tn_{h}
\end{align}
which completes the proof of (\ref{eq:trace-cont-b}).


\end{proof}

\subsection{Interpolation}

Let $E:H^s(\Omega) \rightarrow H^s(\IR^d)$ be a continuous extension operator. 
Define the interpolant $\pi_h : H^1(\Omega) \rightarrow V_h$ by $\pi_h = \pi_{h,Cl} \circ E$ 
where $\pi_{h,Cl}: L^2(\Omega_h) \rightarrow V_h$ is the Clement interpolant and 
$\Omega_h = \cup_{T\in \mcT_h} T$. Using the interpolation results for the Clement 
interpolation operator and the stability of the extension operator we conclude that
\begin{equation}\label{eq:interpol}
\| v - \pi_h v \|_{H^m(\Omega)} \lesssim h^{s-m} \| v \|_{H^s(\Omega)}\qquad 0 \leq m \leq s \leq 2
\end{equation} 
For the energy norm (\ref{eq:energy-norm-nostab}) it holds
\begin{empheq}[box = \widefbox]{equation}\label{eq:interpol-energy}
\tn v - \pi_h v \tn + \|\pi_h v\|_{s_h} \lesssim h^{s-1} \| v \|_{H^s(\Omega)}
\end{empheq}
\begin{proof} With $\rho = v - \pi_h v$ we have  
\begin{equation}
\tn \rho \tn^2_{0,h} \lesssim \| \nabla \rho \|^2_\Omega + h^{-1} \| \rho \|^2_{\bd}
\end{equation}
Using (\ref{eq:interpol}) we directly have 
\begin{equation}
\| \nabla \rho \|^2_\Omega \lesssim h^{2(s-1)} \| u \|^2_{H^s(\Omega)}
\end{equation}
and using the trace inequality 
\begin{equation}
\| v \|^2_{\bd} \lesssim \delta^{-1} \| v \|^2_{U_\delta(\bd)} 
+ \delta \| \nabla v \|^2_{U_\delta(\bd)}  
\end{equation}
with $\delta \sim h$ we obtain
\begin{align}
h^{-1} \| \rho \|^2_{\bd}
&\lesssim 
h^{-1} ( \delta^{-1} \| \rho \|^2_{U_\delta(\bd)} + \delta  \| \nabla \rho \|^2_{U_\delta(\bd)} )
\\
&\lesssim   h^{-2}\| \rho \|^2_{U_\delta(\bd)}  +  \|  \nabla \rho \|^2_{U_\delta(\bd)}  
\\
&\lesssim h^{2(s-1)} \| v \|^2_{H^s(\Omega)}
\end{align}
Finally, we have with $\pi_{h,Cl} \nabla E v \in V_h^d$,
\begin{align}
\|\pi_h v\|^2_{s_h} &\lesssim  h \| [\nabla \pi_h v -
  \pi_{h,Cl} \nabla E v] \|^2_{\mcF_h} \\
& \lesssim \|\nabla \pi_h v -
  \pi_{h,Cl} \nabla E v\|^2_{\mcT_h}\\
&\lesssim  \|\nabla_n (\pi_h v -v)\|^2_{\mcT_h}
  + \|\pi_{h,Cl} \nabla E v - \nabla Ev\|^2_{\mcT_h}
\lesssim   h^{2(s-1)} \| v \|^2_{H^s(\Omega)}
\end{align}
In the first inequality the inverse inequality 
\begin{equation}
h \| [\nabla w ] \|^2_F \lesssim \| \nabla w \|^2_{T_1} +  \| \nabla w \|^2_{T_2}, \qquad 
w \in V_h|_{T_1 \cup T_2}
\end{equation}
where $T_1$ and $T_2$ are the two 
elements that share face $F$.
\end{proof}

\subsection{Error Estimates}

\begin{thm} Let $u \in H^s(\Omega)$, $s \in [1,3/2]$, be the solution to 
(\ref{eq:strong-mixed-a})-(\ref{eq:strong-mixed-b}) and $u_h$ the finite 
element approximation defined by (\ref{eq:method}), then 
\begin{empheq}[box = \widefbox]{align}\nonumber
\tn u - u_h \tn + \|u_h\|_{s_h} &\lesssim h^{s-1} \Big( (1 + |\ln(h)|) 
\| u \|_{H^s(\Omega)} + \| g_N \|_{\widetilde{H}^{s-3/2}(\bn)} \Big)
\\  \nonumber
&\qquad + h \Big( \| f \|_{U_\delta}  + \| f \|_{H^{-1}(\Omega)} + \| g_N \|_{\widetilde{H}^{-1/2}(\bn)}  
 + \| g_D \|_{H^{1/2}(\bd)} \Big)
\end{empheq}
The logarithmic factor is present only for the case of
mixed Dirichlet-Neumann boundary conditions.
\end{thm}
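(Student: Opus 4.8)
\emph{Plan.} The plan is to prove the bound by a Strang-type decomposition that passes through the interpolant $\pi_h u$ and the solution $u_{h,\epsilon}$ of the regularized method \eqref{eq:method-eps}, with the parameters fixed as $\delta\sim h$ and $\epsilon\sim h^d$ so that both the coercivity \eqref{eq:coercivity} and the form-error bound \eqref{eq:form-error} are available. Writing
\[
u-u_h=(u-\pi_h u)+(\pi_h u-u_{h,\epsilon})+(u_{h,\epsilon}-u_h),
\]
and using that $\tn\cdot\tn\le\tn\cdot\tn_h$ and $\|\cdot\|_{s_h}\le\tn\cdot\tn_h$ on $V_h$ (together with $\|u_h\|_{s_h}\le\|u_h-\pi_h u\|_{s_h}+\|\pi_h u\|_{s_h}$), it suffices to control the approximation error $\tn u-\pi_h u\tn+\|\pi_h u\|_{s_h}$, the discrete error $\tn\pi_h u-u_{h,\epsilon}\tn_h$, and the regularization error $\tn u_{h,\epsilon}-u_h\tn_h$. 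The approximation error is $\lesssim h^{s-1}\|u\|_{H^s(\Omega)}$ directly from \eqref{eq:interpol-energy}, with no logarithm.

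For the regularization error I would subtract the defining equations of $u_h$ and $u_{h,\epsilon}$, namely \eqref{eq:method} and \eqref{eq:method-eps}, test with $u_{h,\epsilon}-u_h\in V_h$, and use coercivity \eqref{eq:coercivity}; the right-hand side collapses to $(A_h-A_{h,\epsilon})(u_h,u_{h,\epsilon}-u_h)$, which Lemma \ref{lem:form-error} bounds by $\epsilon h^{1-d}\tn u_h\tn_h\tn u_{h,\epsilon}-u_h\tn_h$. Hence $\tn u_{h,\epsilon}-u_h\tn_h\lesssim\epsilon h^{1-d}\tn u_h\tn_h\sim h\,\tn u_h\tn_h$ for $\epsilon\sim h^d$, and the discrete stability estimate \eqref{eq:discrete-stab-bound} turns $\tn u_h\tn_h$ into the data norms, contributing the $h\big(\|f\|_{H^{-1}(\Omega)}+\|g_N\|_{\widetilde{H}^{-1/2}(\bn)}+\|g_D\|_{H^{1/2}(\bd)}\big)$ part of the estimate.

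The discrete error is the heart of the matter. With $e_h=\pi_h u-u_{h,\epsilon}\in V_h$, coercivity gives $\tn e_h\tn_h^2\lesssim A_{h,\epsilon}(e_h,e_h)+s_h(e_h,e_h)$, and inserting $u$, using the definition of $u_{h,\epsilon}$ together with the Green's formula identity that produced \eqref{eq:residual-regularized}, this reduces to
\[
\tn e_h\tn_h^2\lesssim A_{h,\epsilon}(\pi_h u-u,e_h)+s_h(\pi_h u,e_h)-(g_N,e_h)_{\chi,\bn}.
\]
The stabilization term is $\le\|\pi_h u\|_{s_h}\|e_h\|_{s_h}\lesssim h^{s-1}\|u\|_{H^s(\Omega)}\tn e_h\tn_h$ by \eqref{eq:interpol-energy}. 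For $A_{h,\epsilon}(\pi_h u-u,e_h)$ I would apply the continuity bound \eqref{eq:continuity}, which peels off the dangerous flux pairing $(\nabla_n(u-\pi_h u),e_h)_{\chi,\partial\Omega}$ and leaves $\tn u-\pi_h u\tn\,\tn e_h\tn_h\lesssim h^{s-1}\|u\|_{H^s(\Omega)}\tn e_h\tn_h$; the flux pairing is then estimated by Lemma \ref{lem:trace-cont} with $v=u\in V$, $v_h=\pi_h u$, $w=e_h$, producing $(1+|\ln h|)\|\nabla(u-\pi_h u)\|_{U_\delta}$, $h\|\Delta u\|_{U_\delta}$ and $h^{1/2}\|[\nabla_n\pi_h u]\|_{\mcF_h\cap U_\delta}$. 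The first of these, bounded by $(1+|\ln h|)h^{s-1}\|u\|_{H^s(\Omega)}$ via \eqref{eq:interpol}, is precisely the source of the logarithmic factor; the second equals $h\|f\|_{U_\delta}$ since $\Delta u=-f$; the third is $\lesssim\|\pi_h u\|_{s_h}\lesssim h^{s-1}\|u\|_{H^s(\Omega)}$ because $\mcF_h\cap U_\delta\subset\mcF_h(\partial\Omega)$ for $\delta\sim h$. It remains to treat the regularized-consistency term $(g_N,e_h)_{\chi,\bn}=(\chi g_N,e_h)_{\bn}$ by an $H^{s-3/2}$--$H^{3/2-s}$ duality pairing over $\bn$: since $\chi$ restricted to $\bn$ is supported in the tube $U_\epsilon(\Sigma)$, see \eqref{eq:Ugamsig}, and $\epsilon\sim h^d$, one estimates $\|\chi e_h\|_{H^{3/2-s}(\bn)}\lesssim h^{s-1}\tn e_h\tn_h$ using \eqref{eq:technical-aa}, \eqref{eq:dscrete-sobolev}, the local properties of $\chi$ from \eqref{eq:cutoff}, and inverse estimates, which yields the $h^{s-1}\|g_N\|_{\widetilde{H}^{s-3/2}(\bn)}$ contribution. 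Dividing by $\tn e_h\tn_h$ and adding the three error pieces gives the theorem.

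The step I expect to be the main obstacle is the last one, the control of $(\chi g_N,e_h)_{\bn}$. It requires giving meaning to a duality pairing in the fractional space $\widetilde{H}^{s-3/2}(\bn)$ --- which is exactly why $g_N$ had to be chosen in the smaller space $\widetilde{H}^{-1/2}(\bn)$ rather than in $H^{-1/2}(\bn)$ --- and, more delicately, a bound on $\|\chi e_h\|_{H^{3/2-s}(\bn)}$ with the correct power of $h$; here a naive elementwise inverse estimate is unavailable because $\bn$ cuts the elements in an arbitrary way, so the stabilization control encoded in $\tn\cdot\tn_h$ and the logarithmic discrete Sobolev inequality \eqref{eq:dscrete-sobolev} on the thin tube $U_{\delta,\epsilon}$ must be invoked instead. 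A secondary point to watch is that the layer quantities $\|\nabla(u-\pi_h u)\|_{U_\delta}$ and $\|\Delta u\|_{U_\delta}=\|f\|_{U_\delta}$ are finite and correctly bounded, which rests on the standing assumption $u\in V$, i.e.\ $\Delta u\in L^2(U_{\delta_0})$ from \eqref{eq:defV}.
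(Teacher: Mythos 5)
Your proposal is correct and follows essentially the same route as the paper: the identical Strang-type splitting through $\pi_h u$ and $u_{h,\epsilon}$ with $\delta\sim h$, $\epsilon\sim h^d$, coercivity plus Lemma \ref{lem:form-error} and \eqref{eq:discrete-stab-bound} for the regularization error, and continuity \eqref{eq:continuity} plus Lemma \ref{lem:trace-cont} and the residual identity \eqref{eq:residual-regularized} for the discrete error. The only step you leave schematic is the bound on $\|\chi \rho_h\|_{H^{3/2-s}(\bn)}$, which the paper carries out via a trace inequality to $H^{2-s}(U_{\delta_0}(\bn))$ and interpolation between the $L^2$ and $H^1$ bounds from \eqref{eq:dscrete-sobolev}, \eqref{eq:cutoff-v-H1}, \eqref{eq:technical-aa} (picking up a logarithmic factor there as well, see \eqref{eq:error-est-d}), exactly the tools you name.
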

\begin{proof}
We split the error as follows
\begin{align}\nonumber
\tn u - u_h \tn +  \|u_h\|_{s_h}
&\lesssim \tn u - \pi_h u \tn_{h} + \tn \pi_h u - u_h \tn_{h} + \|u_h\|_{s_h}
\\ \nonumber
&\lesssim 
\underbrace{\tn u - \pi_h u \tn_{h}}_{\lesssim h^{s-1} \|u \|_{H^s(\Omega)}} 
+ \underbrace{\tn \pi_h u - u_{h,\epsilon} \tn_{h}}_{I} 
+ \underbrace{\tn u_{h,\epsilon} - u_h \tn_{h}}_{II} +\underbrace{ \|u_h\|_{s_h}}_{III}
\end{align}
where $u_{h,\epsilon}$ is the solution to the regularized 
problem (\ref{eq:method-eps}) and we used the interpolation 
error estimate (\ref{eq:interpol-energy}) to estimate the first 
term on the right hand side.

\paragraph{Term $\bfI$.}  The following estimate holds
\begin{equation}\label{eq:error-est-I}
\tn \pi_h u - u_{h,\epsilon} \tn_{h} 
\lesssim 
(1 + |\ln(h)|) h^{s-1} \Big( \| u \|_{H^s(\Omega)}
+   \| g_N \|_{\widetilde{H}^{s-3/2}(\bn)} \Big) + h\|f\|_{U_\delta}
\end{equation}
To verify the estimate let $\rho_h = \pi_h u - u_{h,\epsilon}$. Using coercivity (\ref{eq:coercivity}) we obtain
\begin{equation}\nonumber
\tn \rho_h \tn_h^2
\lesssim 
A_{h,\epsilon}(\rho_h  ,\rho_h )+s_h(\rho_h ,\rho_h)
\end{equation}
and then employing the definition (\ref{eq:method-eps}) of $u_{h,\epsilon}$ we 
obtain
\begin{align}
&A_{h,\epsilon}(\pi_h u - u_{h,\epsilon} ,\rho_h) +s_h(\pi_h u - u_{h,\epsilon} ,\rho_h)
\\
&\qquad =
A_{h,\epsilon}(\pi_h u ,\rho_h) - L_h(\rho_h) +s_h(\pi_h u ,\rho_h)
\\ 
&\qquad =
A_{h,\epsilon}(\pi_h u - u, \rho_h) 
+ 
A_{h,\epsilon}(u,\rho_h) - L_h(\rho_h) +s_h(\pi_h u ,\rho_h)
\\ \label{eq:error-est-a}
&\qquad \lesssim 
(\tn \pi_h u - u \tn + \|\pi_h u\|_{s_h})\tn \rho_h \tn_h 
+ 
|(\nabla_n( \pi_h u - u ), \rho_h )_{\chi,\partial \Omega}|
\\ 
&\qquad \qquad 
+
|A_{h,\epsilon}(u,\rho_h) - L_h(\rho_h)|
\\ \label{eq:error-est-b}
&\qquad \lesssim 
h^{s-1} \| u \|_{H^s(\Omega)} \tn \rho_h \tn_h 
+
(1+ |\ln(h)|) h^{s-1} \| u \|_{H^s(\Omega)} + h \|f\|_{U_\delta})\tn \rho_h \tn_h 
\\ 
&\qquad \qquad + 
(1 + |\ln(h)|) | h^{s-1}  \| g_N \|_{\widetilde{H}^{s-3/2}(\bn)}  \tn \rho_h \tn_h 
\end{align}
where we used the continuity (\ref{eq:continuity}) in (\ref{eq:error-est-a}), and 
in (\ref{eq:error-est-b}) we used the interpolation error estimate 
(\ref{eq:interpol-energy}) to estimate the first term and then the following estimates 
\begin{gather}\label{eq:error-est-c}
|(\nabla_n( \pi_h u - u ), \rho_h )_{\chi,\partial \Omega}| 
\lesssim \Big( (1+|\ln(h)|)  h^{s-1} \| u \|_{H^s(\Omega)} + h \|f\|_{U_\delta} \Big)\tn  \rho_h  \tn_h
\\ \label{eq:error-est-d}
|A_{h,\epsilon}(u, \rho_h ) - L_h( \rho_h )|
\lesssim
(1+|\ln(h)|)  h^{s-1}  \| g_N \|_{\widetilde{H}^{s-3/2}(\bn)}  \tn \rho_h \tn_h 
\end{gather}

\paragraph{(\ref{eq:error-est-c}).} Using (\ref{eq:trace-cont-b}) followed by the 
interpolation estimate (\ref{eq:interpol-energy}),
\begin{align}
&|(\nabla_n( \pi_h u - u ),\rho_h )_{\chi,\partial \Omega}| 
\\
&\qquad  \lesssim \Big(
(1 + |\ln(h)|)\| \nabla(u - \pi_h u ) \|_{U_\delta} 
\\
&\qquad \qquad \qquad 
+ h \| \Delta u \|_{U_\delta}  
+ h^{1/2} \| [\nabla_n \pi_h u]\|_{\mcF_h \cap U_\delta}  
) 
\Big)
\tn \rho_h \tn_h 
\\ 
&\qquad \lesssim 
\Big( (1+|\ln(h)|) h^{s-1} \| u  \|_{H^s(\Omega)} + h \| f \|_{U_\delta} ) \Big) \tn \rho_h\tn_h
\end{align}
where we used the fact $\Delta u = -f$. 
\paragraph{(\ref{eq:error-est-d}).} Starting from the identity 
(\ref{eq:residual-regularized}) we get
\begin{align}
|A_{h,\epsilon}(u,\rho_h) - L_h(\rho_h)| &= |(g_N, \chi \rho_h)_{\bn}| 
\\
&\lesssim 
 \| g_N \|_{\widetilde{H}^{s-3/2}(\bn)} 
\| \chi \rho_h\|_{H^{3/2-s}(\bn)}
\end{align}
%
To estimate 
$ \| \chi \rho_h\|_{H^{3/2-s}(\bn)}$ we use a trace inequality on $U_{\delta_0}(\bn))$,
%
\begin{align}
 \| \chi \rho_h\|_{H^{3/2-s}(\bn)}
& \lesssim 
  \|\chi \rho_h \|_{H^{2-s}(U_{\delta_0}(\bn))} 
\end{align}
In order to estimate the right hand side using the available bounds we employ 
the interpolation between norms estimate
\begin{equation}
\| v \|_{H^\gamma(\omega)} \lesssim \| v \|^{1-t}_{H^{s_1}(\omega)} 
\| v \|^{t}_{H^{s_2}(\omega)}
\end{equation}
for $t \in [0,1]$ and $\gamma = (1-t) s_1 + t s_2$. In our case $\gamma = 2-s \in [1/2,1]$
and we take $s_1 = 0$ and $s_2 =1$, which gives $t = 2-s$. 
%
%
Observing that $\text{supp}(\chi) \cap U_{\delta_0} (\bn) \subset U_{\delta,\epsilon}$ we get 
\begin{align}
\| \chi \rho_h \|_{H^{2-s}(U_{\delta,\epsilon})}
&\lesssim 
\| \chi \rho_h \|^{s-1}_{H^0(U_{\delta,\epsilon})} \| \chi \rho_h \|^{2-s}_{H^1(U_{\delta,\epsilon})}
\\
&\lesssim \Big( (1 +|\ln(h)|) h \tn \rho_h \tn_h\Big)^{s-1} \Big( (1 +|\ln(h)|) \tn \rho_h \tn_h\Big)^{2-s}
\\
&\lesssim (1 +|\ln(h)|) h^{s-1} \tn \rho_h \tn_h
\end{align}
Here we used the following two estimates. First
\begin{align}
\| \rho_h \|^2_{U_{\delta,\epsilon}} 
&= \int_\Sigma \| \rho_h \|^2_{U_{\delta,\epsilon}(z)} 
\\
&\lesssim \int_\Sigma h^2 \| \rho_h \|^2_{L^\infty(U_{\delta,\epsilon}(z))} 
\\
&\lesssim \int_\Sigma h^2 \| \rho_h \|^2_{L^\infty(U_{\delta_0,\epsilon_0}(z))} 
\\
&\lesssim h^2 (1+ |\ln(h)|) \tn \rho_h \tn^2_{1,h} 
\end{align} 
where we at last used (\ref{eq:dscrete-sobolev}). Second
\begin{align}
\| \chi \rho_h \|_{H^1(U_{\delta,\epsilon})} 
&\lesssim
\| \chi \rho_h \|_{U_{\delta,\epsilon}} 
+ 
\| (\nabla \chi) \rho_h \|_{U_{\delta,\epsilon}} 
+
\| \chi \nabla \rho_h \|_{U_{\delta,\epsilon}} 
\\
&\lesssim 
(1+|\ln(h)|) \tn \rho_h \tn_h 
\end{align}
where we used (\ref{eq:cutoff-v-H1}) and (\ref{eq:technical-aa}). This 
completes the bound for Term $I$.

\paragraph{Term $\bfI\bfI $.} For $\epsilon\sim h^\alpha$ with $\alpha = d$, we shall prove 
the estimate
\begin{equation}\label{eq:error-est-II}
\tn u_{h,\epsilon} - u_h \tn_h  \lesssim h  \Big( \| f \|_{H^{-1}(\Omega)} + \| g_N \|_{\widetilde{H}(\bn)}  
 +  \| g_D \|_{\bd} \Big) 
\end{equation}
We start once again with coercivity, this time of $A_h+s_h$, using the notation 
$\zeta_h = u_{h,\epsilon} - u_h$ we have
\begin{align}
\tn \zeta_h \tn_h^2 &\lesssim A_{h}(\zeta_h,\zeta_h)+s_h(\zeta_h,\zeta_h)
\end{align}
Then using the definition of the method and estimate (\ref{eq:form-error}) 
we obtain
\begin{align}
\tn \zeta_h \tn_h^2 & \lesssim A_{h}(u_{h,\epsilon} - u_h,\zeta_h) + s_h (u_{h,\epsilon} - u_h,\zeta_h)
\\
& =
A_{h}(u_{h,\epsilon},\zeta_h) + s_h (u_{h,\epsilon},\zeta_h)-L_h(\zeta_h ) 
\\ 
& =
A_h(u_{h,\epsilon},\zeta_h) - A_{h,\epsilon}(u_{h,\epsilon},\zeta_h)
\\ 
& \lesssim 
\epsilon h^{1-d} \tn u_{h,\epsilon} \tn_h \tn \zeta_h\tn_h
\\ 
& \lesssim 
 h^{\alpha + 1-d} \Big( \| f \|_{H^{-1}(\Omega)} + \| g_N \|_{\widetilde{H}^{-1/2}(\bn)}  
 + h^{-1/2} \| g_D \|_{H^{1/2}(\bd)} \Big) \tn \zeta_h\tn_h
\\ 
& \lesssim 
 h  \Big(  \| f \|_{H^{-1}(\Omega)} + \| g_N \|_{\widetilde{H}^{-1/2}(\bn)}  
 + \| g_D \|_{H^{1/2}(\bd)} \Big)   \tn \zeta_h\tn_h
\end{align}
for $\alpha = d $, where we used the stability estimate (\ref{eq:discrete-stab-bound}).

\paragraph{Term $\bfI\bfI \bfI$.}
We finally have the following estimate for the stabilization term
\begin{align}
\|u_h\|_{s_h} & \leq \|u_h - u_{h,\epsilon} \|_{s_h}+\|\pi_h u -
              u_{h,\epsilon}\|_{s_h}+\|\pi_h u\|_{s_h}
              \\ \label{eq:error-est-III}
&= \tn \zeta_h\tn_h+\tn \rho_h\tn_h+\|\pi_h u \|_{s_h}
\end{align}
where the first two terms are estimated in (\ref{eq:error-est-II}) and (\ref{eq:error-est-I}) and the third by the interpolation estimate (\ref{eq:interpol-energy}).

\paragraph{Conclusion.} The theorem now follows by collecting the bounds for the terms 
$I$, $II$, and $III$.
\end{proof}
\textcolor{black}{
\begin{rem}
Observe that the logarithmic factor can be traced to Lemma
\ref{lem:sobolev_sigma}, Lemma \ref{lem:weight_stab} and \eqref{eq:error-est-d} all of which
are invoked only for the case of mixed boundary conditions
\end{rem}}
\bibliographystyle{abbrv}
\bibliography{ref-low-reg.bib}

\begin{thebibliography}{1}

\bibitem{BCHLM15}
E.~Burman, S.~Claus, P.~Hansbo, M.~G. Larson, and A.~Massing.
\newblock Cut{FEM}: discretizing geometry and partial differential equations.
\newblock {\em Internat. J. Numer. Methods Engrg.}, 104(7):472--501, 2015.

\bibitem{BH12}
E.~Burman and P.~Hansbo.
\newblock Fictitious domain finite element methods using cut elements: {II}.
  {A} stabilized {N}itsche method.
\newblock {\em Appl. Numer. Math.}, 62(4):328--341, 2012.

\bibitem{BHL18}
E.~Burman, P.~Hansbo, and M.~G. Larson.
\newblock A cut finite element method with boundary value correction.
\newblock {\em Math. Comp.}, 87(310):633--657, 2018.

\bibitem{BHLZ16}
E.~Burman, P.~Hansbo, M.~G. Larson, and S.~Zahedi.
\newblock Cut finite element methods for coupled bulk-surface problems.
\newblock {\em Numer. Math.}, 133(2):203--231, 2016.

\bibitem{DiPE12}
D.~A. Di~Pietro and A.~Ern.
\newblock {\em Mathematical aspects of discontinuous {G}alerkin methods},
  volume~69 of {\em Math\'ematiques \& Applications (Berlin) [Mathematics \&
  Applications]}.
\newblock Springer, Heidelberg, 2012.

\bibitem{EG18}
A.~Ern and J.-L. Guermond.
\newblock Analysis of the edge finite element approximation of the {M}axwell
  equations with low regularity solutions.
\newblock {\em Comput. Math. Appl.}, 75(3):918--932, 2018.

\bibitem{Gudi10}
T.~Gudi.
\newblock A new error analysis for discontinuous finite element methods for
  linear elliptic problems.
\newblock {\em Math. Comp.}, 79(272):2169--2189, 2010.

\bibitem{LJS17}
N.~L\"uthen, M.~Juntunen, and R.~Stenberg.
\newblock An improved a priori error analysis of nitsche’s method for robin
  boundary conditions.
\newblock {\em Numerische Mathematik}, 2017.
\newblock https://doi.org/10.1007/s00211-017-0927-1.

\bibitem{Sav97}
G.~Savar\'e.
\newblock Regularity and perturbation results for mixed second order elliptic
  problems.
\newblock {\em Comm. Partial Differential Equations}, 22(5-6):869--899, 1997.

\end{thebibliography}

\end{document}